%------------------------------------------------------------------------------
% Beginning of journal.tex
%------------------------------------------------------------------------------
%
% AMS-LaTeX version 2 sample file for journals, based on amsart.cls.
%
%        ***     DO NOT USE THIS FILE AS A STARTER.      ***
%        ***  USE THE JOURNAL-SPECIFIC *.TEMPLATE FILE.  ***
%
% Replace amsart by the documentclass for the target journal, e.g., tran-l.
%
\documentclass{amsart}
\usepackage{pdfpages}
\usepackage{dirtytalk}
\usepackage{hyperref}
\newtheorem{theorem}{Theorem}[section]

\newtheorem{corollary}[theorem]{Corollary}
\theoremstyle{definition}
\newtheorem{definition}[theorem]{Definition}

\newtheorem{proposition}{Proposition}
\theoremstyle{remark}
\newtheorem{remark}[theorem]{Remark}

\numberwithin{equation}{section}

%    Absolute value notation

%    Blank box placeholder for figures (to avoid requiring any
%    particular graphics capabilities for printing this document).

\begin{document}

\title{Geometric invariants of normal curves under conformal transformation in $\mathbb{E}^3$}

\author{Mohamd Saleem Lone}

\address{International Centre for Theoretical Sciences, Tata Institute of Fundamental Research, 560089, India}
\email{saleemraja2008@gmail.com (or) mohamdsaleem.lone@icts.res.in}

%\thanks{Support information for the second author.}

%    General info
\subjclass[2000]{53A04, 53A05, 53A15}

%\date{January 1, 2001 and, in revised form, June 22, 2001.}

%\dedicatory{This paper is dedicated to our advisors.}

\keywords{ Conformal motion, isometry, normal curve, osculating curve, rectifying curve.}

\begin{abstract}
 In this paper, we investigate the geometric invariant properties of  a normal curve on a smooth immersed surface under conformal transformation. We obtain an invariant-sufficient condition for the conformal image of a normal curve. We also find the deviations of normal and tangential components of the normal curve under the same motion. The results in \cite{9} are claimed as special cases of this paper.
\end{abstract}
\dedicatory{Dedicated to Prof. B.-Y. Chen}

\maketitle
\section{Introduction} 
The study of smooth maps is an important field of study in differential geometry. There are multiple ways of classifying motions, albeit we will focus on those which preserves certain geometric properties. Depending upon the invariant nature of the mean$(H)$ and the Gaussian curvatures$(K)$, we broadly classify the transformations in the following three equivalence classes: isometric, conformal and non-conformal or general motion. Isometry preserves lengths as well as the angles between the curves on the surfaces. In the language of geometry, isometry keeps the Gaussian curvature invariant and the mean curvature is altered. For example, we can easily find an isometry between  catenoid and a helicoid implying that they have the same $K$ but different $H$. Roughly speaking, diffeomorphisms and isometries define one class, however, when we have to study the problems associated with analytic functions of complex variables, we need a generalized class of transformations, known as conformal motions. In this case, the angle of intersection of any arbitrary pair of intersection arcs on the surface is invariant, while as the distances may not be. Conformal maps are very important in cartography. The simplest example of such a conformal transformation is the stereographic projection of a sphere onto a plane. This property of conformal maps was first used by
 Gerardus Mercator to form the first angle preserving map, commonly known as Mercator's world map. Recently in 2018, Bobenko and Gunn published an animated movie(must-watch) with the springer videoMATH  on conformal maps \cite{1}. Finally, in case of general motions, neither angles nor distances are preserved between any intersecting pairs of curves on a surface. It is to be noted that the usage of term motion, transformation or map stands for the same. 
 
Let $\mathcal{S}$ and $\tilde{{\mathcal{S}}}$ be two smooth immersed surfaces in $\mathbb{E}^3$ and $ \mathcal{J}: {\mathcal{S}}\rightarrow {\tilde{\mathcal{S}}}$ be a smooth map. Throughout this paper, the quantities associated with $\tilde{{\mathcal{S}}}$  will be deonted by 
$"\sim"$. A necessary and sufficient condition for $ \mathcal{J}$ to be conformal is that the first fundamental form quantities are proportional. In other words the area elements of $\mathcal{S}$ and $\tilde{\mathcal{S}}$ are proportional to a differentiable function(factor) commonly known as dilation function denoted by $\zeta(u,v)$. The conformal transformation is a generalized class of certain motions in the following way  \cite{6}:
\begin{itemize}
\item If $\zeta(u,v) \equiv c,$ where $c$ is a constant with $c\neq \{0,1\}$, then $ \mathcal{J}$ is called as homothetic transformation.
\item If $\zeta(u,v) \equiv 1,$ then $ \mathcal{J}$  becomes isometry.
\end{itemize}
 Let $V$ of a neighborhood  of an arbitrary point $p \in \mathcal{S}$ and 
 \begin{equation}\label{0}
  \mathcal{J} :V \subset {\mathcal{S}} \rightarrow \tilde{V} \subset \tilde{{\mathcal{S}}}
  \end{equation} be a diffeomorphism, where $\tilde{V}$ is an open neighborhood of $\mathcal{J}(p)$. Then $\mathcal{J}$ is said to be a local isometry if for all  $y_1,y_2 \in T_p({\mathcal{S}})$, we have
$$\langle y_1, y_2 \rangle_p =\langle d \mathcal{J}_p(y_1),d \mathcal{J}_p(y_2) \rangle_{ \mathcal{J}(p)}.$$
If for all $p \in \mathcal{S}$, in addition to diffeomorphism $\mathcal{J}$ is a bijection, then $\mathcal{J}$ is a global isometry. In such a case $\mathcal{S}$ and $\tilde{{\mathcal{S}}}$ are said to isometric(globally).

Let $\mathcal{E},\mathcal{F},\mathcal{G}$ and $\tilde{\mathcal{E}},\tilde{\mathcal{F}},\tilde{\mathcal{G}}$ are the first fundamental form coefficients of ${\mathcal{S}}$ and $\tilde{{\mathcal{S}}}$, respectively. A necessary sufficient condition for $\mathcal{S}$ and $\tilde{{\mathcal{S}}}$ to be isometric is that the first fundamental form coefficients are invariant, i.e., 
\begin{equation*} \mathcal{E}=\tilde{\mathcal{E}},\quad \mathcal{F}=\tilde{\mathcal{F}},\quad \mathcal{G}=\tilde{\mathcal{G}}.
\end{equation*}

 For the same $\mathcal{J}$ in (\ref{0}), if we have
$$\zeta^2\langle d \mathcal{J}_p(x_1),d \mathcal{J}_p(x_2) \rangle_{ \mathcal{J}(p)}=\langle x_1, x_2 \rangle_p, $$
 then ${\mathcal{S}}$ and $\tilde{{\mathcal{S}}}$ are said to conformal(locally). As in the case of isometry, if in addition to diffeomorphism $\mathcal{J}$ is a bijection, then $\mathcal{J}$ is called conformal globally. In other words, we can say that conformal motion is a composition of dilation
and isometry.  In this case \cite{5}: 
\begin{equation*}\zeta^2\mathcal{E}=\tilde{\mathcal{E}},\quad  \zeta^2\mathcal{F}=\tilde{\mathcal{F}},\quad  \zeta^2\mathcal{G}=\tilde{\mathcal{G}}.\end{equation*}
Here we may call $\mathcal{E},\mathcal{F},\mathcal{G}$ are conformally invariant.

\begin{definition}\label{def1}
Let $\mathfrak{f}:{\mathcal{S}}\rightarrow \tilde{\mathcal{S}}$ be a conformal map between two smooth surfaces, we say that $\mathfrak{f}$ is conformally invariant if $\tilde{\mathfrak{f}}=\zeta^2\mathfrak{f}$ for some dilation factor $\zeta(u,v)$. Similarly, if the same $\mathfrak{f}$ is homothetic, we say that $\mathfrak{f}$ is homothetic invariant if $\tilde{\mathfrak{f}}=c\mathfrak{f}, (c\neq\{0,1\})$.

For example let $K_g$ be the Gaussian curvature of $(\mathcal{S},g)$ and $\chi(\mathcal{S})$ be the Euler characteristic of the surface $\mathcal{S}$. Then according to well known Gauss Bonnet formula:
$$2 \pi \chi (\mathcal{S})= \int_\mathcal{S} K_g ds_g.$$
The above quantity is a topological and a conformal invariant.
\end{definition}

The structure of this paper is as follows. In section $2$, we recall some facts about the curves lying on a smooth surface and give the motivation of the paper. In section $3$, we discuss the main results.

\section{Preliminaries}
Let $\beta:I\subset \mathbb{R}\rightarrow \mathbb{E}^3$ be a smooth curve parameterized by arc length $s$ and $\{\vec{\mathfrak{t}},\vec{\mathfrak{n}},\vec{\mathfrak{b}}\}$ its Serret-Frenet frame. The vectors $\vec{\mathfrak{t}},\vec{\mathfrak{n}}$, and $\vec{\mathfrak{b}}$ are called as the tangent, the normal and the binormal vectors, respectively. The Serret-Frenet equations are given by 
\begin{eqnarray*}
\left\{
\begin{array}{ll}
\vec{\mathfrak{t}}^\prime =\kappa \vec{\mathfrak{n}}\\
\vec{\mathfrak{n}}^\prime = -\kappa \vec{\mathfrak{t}} +\tau \vec{\mathfrak{b}}\\
\vec{\mathfrak{b}}^\prime =-\tau \vec{\mathfrak{n}}.
\end{array}
\right.
\end{eqnarray*}
We call the function $\kappa$ as the curvature of $\beta$ and  $\tau$ as the torsion of $\beta$ satisfying: $\vec{\mathfrak{t}}=\beta^\prime, \vec{\mathfrak{n}}=\frac{{\vec{\mathfrak{t}}}^\prime}{\kappa}$ and $\vec{\mathfrak{b}}=\vec{\mathfrak{t}}\times \vec{\mathfrak{n}}$. At any arbitrary point $\beta(s)$, the plane spanned by $\{\vec{\mathfrak{t}},\vec{\mathfrak{n}}\}$ is called as an osculating plane and the plane spanned by $\{\vec{\mathfrak{t}},\vec{\mathfrak{b}}\}$ is called as a rectifying plane. Similarly, a plane spanned by the vectors $\{\vec{\mathfrak{n}},\vec{\mathfrak{b}}\}$ is called as a normal plane. In other words, the position vector of the curve defines the following curves:
\begin{itemize}
\item If the position vector $\beta(s)$ of the curve $\beta$ lies in the osculating plane then the curve is said to be an osculating curve.
\item If the position vector $\beta(s)$ of the curve $\beta$ lies in the normal plane then the curve is said to be a normal curve.         
\item If the position vector $\beta(s)$ of the curve $\beta$ lies in the rectifying plane then the curve is said to be a rectifying curve.         
\end{itemize}
The classification results of osculating and normal curves are very common which can be found in any standard book of differential geometry of curves and surfaces. After a very long period of time, in 2003 Chen \cite{2} listed a question: When does the position vector of a space curve lie in its rectifying plane? In this paper(\cite{2}), Chen showed that a curve is a rectifying curve if and only if the ratio of the curvature and the torsion is a linear function of arc length $s$. 
For more study, we refer \cite{4,6i}.

The motivation of the present paper starts with a study of Shaikh and Ghosh, where they studied the geometric invariant properties of rectifying curves on a smooth immersed surface under an isometry\cite{8}. Further in \cite{8a}, they investigated the invariant properties of osculating curves under the same motion. Later on, in \cite{9} the authors in \cite{8} and I found the invariant-sufficient condition for a normal curve under an isometric transformation. Afterwards, we generalized the notion of study by the conformal transformation. The invariant properties of rectifying and osculating curves under a conformal transformation are studied in \cite{10,11}. Now, in this paper, we try to investigate the following:

{\it Question:} What are the invariant properties of a normal curve on a smooth immersed surface with respect to a conformal transformation?

 A curve is said to be a normal curve if its position vector field lies in the orthogonal complement of tangent vector i.e., $\beta \cdot \vec{\mathfrak{t}} =0,$ or
\begin{equation}\label{1}
\beta(s)=\nu(s)\vec{\mathfrak{n}}(s)+ \eta(s)\vec{\mathfrak{b}}(s),
\end{equation}
where $\nu,$ $\eta$ are two smooth functions.

Let $\Psi: \Omega(u,v) \subset \mathbb{R}^2\rightarrow \mathcal{S}\subset\mathbb{R}^3 $ be a coordinate chart map of a regular surface ${\mathcal{S}}$. The curve 
$\beta(s)=\beta(u(s),v(s))$ can be thought of a curve 
$ \beta(s)= {\mathcal{S}}(u(s),v(s))$ on the surface ${\mathcal{S}}$. Using the chain rule, we can easily find
\begin{eqnarray}
\nonumber\beta^\prime(s)&=&\Psi_uu^\prime+\Psi_vv^\prime\\
\nonumber\text{or }&&\\
 \nonumber \vec{\mathfrak{t}}(s)&=&\beta^\prime(s)=\Psi_uu^\prime+\Psi_vv^\prime\\
  \nonumber {\vec{\mathfrak{t}}^\prime}(s)&=& u^{\prime\prime}\Psi_u+v^{\prime\prime}\Psi_v+{u^\prime}^2\Psi_{uu}+2u^\prime v^\prime \Psi_{uv}+{v^\prime}^2\Psi_{vv}.
\end{eqnarray}
Now let ${\bf N}$ be the surface normal, we have
\begin{equation}
\label{2}\vec{\mathfrak{n}}(s)=\frac{1}{k(s)}(u''\Psi_u+v''\Psi_v+u'^2\Psi_{uu}+2u'v'\Psi_{uv}+v'^2\Psi_{vv})
\end{equation}
\begin{eqnarray}
\nonumber
\vec{\mathfrak{b}}(s)&=& \vec{\mathfrak{t}}(s)\times \vec{\mathfrak{n}}(s)= \vec{\mathfrak{t}}(s)\times \frac{\vec{\mathfrak{t}}'(s)}{k(s)}\\
\nonumber
&=&\frac{1}{k(s)}\Big[(\Psi_uu'+\Psi_vv')\times(u''\Psi_u+v''\Psi_v+u'^2\Psi_{uu}+2u'v'\Psi_{uv}+v'^2\Psi_{vv})\Big],\\
\nonumber&=&\frac{1}{k(s)}\Big[\{u'v''-u''v'\}{\bf N}+u'^3\Psi_u\times \Psi_{uu}+2u'^2v'\Psi_u\times \Psi_{uv}+u'v'^2\Psi_u\times \Psi_{vv}\\
\label{3} &&+u'^2v'\Psi_v\times \Psi_{uu}+2u'v'^2\Psi_v\times \Psi_{uv}+v'^3\Psi_v\times \Psi_{vv}\Big].
\end{eqnarray}
\begin{definition}
Suppose $\beta$ be a curve with arc length parameterization lying on a surface ${\mathcal{S}}$. This implies that $\mathfrak{t}=\beta^\prime$ is orthogonal to the unit surface normal ${\bf N}$, so $\beta^\prime$, ${\bf N}$ and ${\bf N} \times \beta^\prime$ are mutually orthogonal vectors. Since $\beta$ is of unit speed, we have $\beta^{\prime}\perp \beta^{\prime\prime}$, thus we can write 
\begin{equation*}
\beta^{\prime\prime}=\kappa_{n}{\bf N}+\kappa_g{\bf N}\times \beta^\prime,
\end{equation*}
where $\kappa_n$ is the normal curvature and $\kappa_g$ is the geodesic curvature of $\beta$ and are given by
\begin{eqnarray*}
\left\{
\begin{array}{ll}
\kappa_g =\beta^{\prime \prime}\cdot {\bf N} \times \beta^\prime \\
\kappa_n =\beta^{\prime \prime}\cdot {\bf N}.
\end{array}
\right.
\end{eqnarray*}
Now since we know that $\beta^{\prime \prime}=\kappa(s)\vec{\mathfrak{n}}(s)$, therefore we can write
\begin{equation*}
\kappa_n=\kappa(s)\vec{\mathfrak{n}}(s)\cdot {\bf N}=(u''\Psi_u+v''\Psi_v+u'^2\Psi_{uu}+2u'v'\Psi_{uv}+v'^2\Psi_{vv})\cdot{\bf N}
\end{equation*}
or 
\begin{equation}\label{se1}
\kappa_n = {u^\prime}^2 \mathcal{L} + 2u^\prime v^\prime \mathcal{M} +{v^\prime}^2\mathcal{N},
\end{equation}
where $\mathcal{L},\mathcal{M},\mathcal{N}$ are the second fundamental form coefficients. The curve $\beta$ on $\mathcal{S}$ is called as asymptotic curve if and only if $\kappa_n=0.$
\end{definition}
\section{Conformal image of a normal curve.}
Suppose $\beta(s)$ be a normal curve lying on a smooth immersed surface ${\mathcal{S}}$ in $\mathbb{E}^3$, then with the help of (\ref{1}), (\ref{2}) and (\ref{3}), we can write
\begin{eqnarray}\label{2.1}
\nonumber\beta(s)&=&\frac{\nu(s)}{\kappa(s)}\left[(u^{\prime\prime}\Psi_u+v^{\prime\prime} \Psi_v)+({u^\prime}^2 \Psi_{uu}+2u^\prime v^\prime \Psi_{uv}+{v^\prime}^2 \Psi_{vv})\right]\\
&&+\frac{\eta(s)}{k(s)}\Big[\{u'v''-u''v'\}{\bf N}+u'^3\Psi_u\times \Psi_{uu}+2u'^2v'\Psi_u\times \Psi_{uv}\\
\nonumber
&&+u'v'^2\Psi_u\times \Psi_{vv}+u'^2v'\Psi_v\times \Psi_{uu}+2u'v'^2\Psi_v\times \Psi_{uv}+v'^3\Psi_v\times \Psi_{vv}\Big].
\end{eqnarray}
We shall be considering the expression $\mathcal{J}_*(\beta(s))$ as a product of a $3\times 3$ matrix $J_*$ and a $3\times 1$ matrix $\beta(s)$.
\begin{theorem}
Let $ \mathcal{J}:{\mathcal{S}}\rightarrow {\tilde{\mathcal{S}}}$ be a conformal map between two smooth immersed surfaces ${\mathcal{S}}$ and $\tilde{\mathcal{S}}$ in $\mathbb{E}^3$ and $\beta(s)$ be a normal curve on ${\mathcal{S}}$, then $\tilde{\beta}(s)$ is a normal curve on $\tilde{\mathcal{S}}$ if
\begin{eqnarray}\label{lj2.2}
\nonumber \tilde{\beta}&=&\frac{\nu}{\kappa}\Big[{u^\prime}^2\left(\zeta  \mathcal{J}_\ast\right)_u\Psi_u+{v^\prime}^2\left(\zeta  \mathcal{J}_\ast\right)_v \Psi_v +2u^\prime v^\prime \left(\zeta  \mathcal{J}_\ast\right)_u \Psi_v\Big]+\frac{\eta}{\kappa}\Big[{u^\prime}^3 \zeta  \mathcal{J}_\ast \Psi_u \times \left(\zeta  \mathcal{J}_\ast\right)_u \Psi_u \\
\nonumber &&+2{u^\prime}^2 v^\prime \zeta  \mathcal{J}_\ast \Psi_u \times \left(\zeta  \mathcal{J}_\ast\right)_v \Psi_u +
u^\prime {v^\prime}^2\zeta  \mathcal{J}_\ast \Psi_u \times \left(\zeta  \mathcal{J}_\ast \right)_v \Psi_v+{u^\prime}^2v^\prime \zeta  \mathcal{J}_\ast \Psi_v \times \left(\zeta  \mathcal{J}_\ast\right)_u\Psi_u \\
&&\quad \quad +2u^\prime {v^\prime}^2\zeta  \mathcal{J}_\ast \Psi_v \times \left(\zeta  \mathcal{J}_\ast \right)_u \Psi_v+{v^\prime}^3\zeta  \mathcal{J}_\ast \Psi_v \times \left(\zeta  \mathcal{J}_\ast \right)_v\Psi_v\Big]+\zeta  \mathcal{J}_\ast(\beta).
\end{eqnarray}

\end{theorem}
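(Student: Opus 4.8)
The plan is to compute the conformal image $\tilde{\beta}$ explicitly and show that it organizes into a sum lying entirely in the normal plane $\{\tilde{\mathfrak{n}},\tilde{\mathfrak{b}}\}$ of $\tilde{\beta}$, which is precisely the assertion that $\tilde{\beta}$ is a normal curve. I would start from the representation (\ref{2.1}) of $\beta$ in the chart data $\Psi_u,\Psi_v,\Psi_{uu},\Psi_{uv},\Psi_{vv}$ and ${\bf N}$, and write down the analogous representation for $\tilde{\beta}$ on $\tilde{\mathcal{S}}$, namely the same formula with every $\Psi$ replaced by $\tilde{\Psi}$ and ${\bf N}$ by $\tilde{\bf N}$. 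Since by (\ref{2}) and (\ref{3}) the $\tfrac{\tilde{\nu}}{\tilde{\kappa}}$-block is exactly $\tilde{\nu}\tilde{\mathfrak{n}}$ and the $\tfrac{\tilde{\eta}}{\tilde{\kappa}}$-block is exactly $\tilde{\eta}\tilde{\mathfrak{b}}$, this barred version of (\ref{2.1}) is by construction of normal-curve type; the whole problem thus reduces to re-expressing the barred chart derivatives in terms of the unbarred ones and checking that the result collapses to (\ref{lj2.2}).

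The key inputs are the chain-rule relations for the parametrization $\tilde{\Psi}=\mathcal{J}\circ\Psi$. First I would record the first-order relations $\tilde{\Psi}_u=\zeta\mathcal{J}_\ast\Psi_u$ and $\tilde{\Psi}_v=\zeta\mathcal{J}_\ast\Psi_v$ coming from the conformal differential with dilation $\zeta$. Differentiating once more and using the product rule gives
\begin{equation*}
\tilde{\Psi}_{uu}=(\zeta\mathcal{J}_\ast)_u\Psi_u+\zeta\mathcal{J}_\ast\Psi_{uu},\quad \tilde{\Psi}_{uv}=(\zeta\mathcal{J}_\ast)_u\Psi_v+\zeta\mathcal{J}_\ast\Psi_{uv},\quad \tilde{\Psi}_{vv}=(\zeta\mathcal{J}_\ast)_v\Psi_v+\zeta\mathcal{J}_\ast\Psi_{vv},
\end{equation*}
which is exactly where the derivative factors $(\zeta\mathcal{J}_\ast)_u,(\zeta\mathcal{J}_\ast)_v$ of the theorem are born. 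For the binormal block I would also use that the rotational part of $\mathcal{J}_\ast$ is cross-product equivariant, so that each barred product $\tilde{\Psi}_i\times\tilde{\Psi}_{jk}$ splits, via the same product rule, into a term of the form $\zeta\mathcal{J}_\ast\Psi_i\times(\zeta\mathcal{J}_\ast)_{\bullet}\Psi_{\bullet}$ and a ``pure'' term proportional to $\mathcal{J}_\ast(\Psi_i\times\Psi_{jk})$; likewise $\tilde{\bf N}=\mathcal{J}_\ast{\bf N}$.

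Substituting all of this into the barred version of (\ref{2.1}) and sorting the terms into two groups finishes the argument. The product-rule corrections $(\zeta\mathcal{J}_\ast)_u\Psi_u$, $(\zeta\mathcal{J}_\ast)_u\Psi_v$, $(\zeta\mathcal{J}_\ast)_v\Psi_v$ in the $\tfrac{\nu}{\kappa}$-block, together with the six cross-product corrections in the $\tfrac{\eta}{\kappa}$-block, reproduce precisely the two explicit brackets of (\ref{lj2.2}); the remaining ``pure pushforward'' pieces — the images of the $u''\Psi_u,v''\Psi_v$ terms, of the $\zeta\mathcal{J}_\ast\Psi_{jk}$ parts, of the $\{u'v''-u''v'\}{\bf N}$ term, and of the equivariant cross products — reassemble into $\zeta\mathcal{J}_\ast$ applied to the right-hand side of (\ref{2.1}), i.e.\ into the closing term $\zeta\mathcal{J}_\ast(\beta)$. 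Because the resulting expression is then visibly of the form $\tilde{\nu}\tilde{\mathfrak{n}}+\tilde{\eta}\tilde{\mathfrak{b}}$, it lies in the normal plane of $\tilde{\beta}$, giving the claim.

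I expect the main obstacle to be the bookkeeping in the $\tfrac{\eta}{\kappa}$ (binormal) block, where one must track all six cross products, apply the product rule to each $\tilde{\Psi}_{jk}$, and cleanly separate the genuinely new correction terms from the pieces that recombine into $\zeta\mathcal{J}_\ast(\beta)$. The delicate point is the handling of the powers of $\zeta$: the tangential block scales by $\zeta$ whereas a cross product of two pushed-forward vectors scales by $\zeta^2$ through the equivariance $\mathcal{J}_\ast a\times\mathcal{J}_\ast b=\mathcal{J}_\ast(a\times b)$ of the rotational part, and the surface-normal term carries yet a different factor; keeping these scalar factors consistent so that the leftover pieces collect exactly as the single term $\zeta\mathcal{J}_\ast(\beta)$ is the crux of matching (\ref{lj2.2}) on the nose.
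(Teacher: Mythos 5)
Your proposal is essentially the paper's own proof: the paper likewise starts from $\tilde{\Psi}_u=\zeta\mathcal{J}_\ast\Psi_u$, $\tilde{\Psi}_v=\zeta\mathcal{J}_\ast\Psi_v$, differentiates these by the product rule to produce the $(\zeta\mathcal{J}_\ast)_u,(\zeta\mathcal{J}_\ast)_v$ corrections, splits each $\tilde{\Psi}_i\times\tilde{\Psi}_{jk}$ into such a correction plus a pure pushforward term $\zeta\mathcal{J}_\ast(\Psi_i\times\Psi_{jk})$, and regroups so that the right-hand side of (\ref{lj2.2}) is identified with the barred version of (\ref{2.1}), i.e.\ with $\frac{\tilde{\nu}}{\tilde{\kappa}}\tilde{\vec{\mathfrak{n}}}+\frac{\tilde{\eta}}{\tilde{\kappa}}\tilde{\vec{\mathfrak{b}}}$, which is the normal-curve conclusion. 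The only cosmetic differences are that you run this chain of identities from the barred normal-curve representation toward (\ref{lj2.2}) while the paper reads it in the opposite direction, and that the paper states explicitly the matching assumption $\frac{\tilde{\nu}}{\tilde{\kappa}}=\frac{\nu}{\kappa}$, $\frac{\tilde{\eta}}{\tilde{\kappa}}=\frac{\eta}{\kappa}$ that your sorting of coefficients implicitly uses (your remark on the $\zeta$ versus $\zeta^2$ scaling of the cross-product terms is a genuine delicacy, one the paper itself passes over silently).
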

\begin{proof}
Let $\tilde{\mathcal{S}}$ be the conformal image of ${\mathcal{S}}$ and $\Psi(u,v)$ and $\tilde{\Psi}(u,v)= \mathcal{J}\circ \Psi(u,v) $ be the surface patches of ${\mathcal{S}}$ and $\tilde{{\mathcal{S}}},$ respectively. Then the differential map $d \mathcal{J}= \mathcal{J}_\ast$ of $ \mathcal{J}$ sends each vector of the tangent space $T_p{\mathcal{S}}$ to a dilated tangent vector of the tangent space of $T_{ \mathcal{J}(p)}\tilde{\mathcal{S}}$ with the dilation factor $\zeta$. 
\begin{eqnarray}\label{2.2}
\tilde{\Psi}_u(u,v)&=&\zeta(u,v)  \mathcal{J}_*(\Psi(u,v))\Psi_u\\
\label{2.3}\tilde{\Psi}_v(u,v)&=&\zeta(u,v)  \mathcal{J}_*(\Psi(u,v))\Psi_v.
\end{eqnarray}
Differentiating $(\ref{2.2})$ and $(\ref{2.3})$ partially with respect to both $u$ and $v$ respectively, we get
%\[(*)\qquad\qquad\qquad\qquad\qquad
%\left\{
\begin{eqnarray}\label{q3.7}
\nonumber \tilde{\Psi}_{uu}&=& \zeta_u  \mathcal{J}_\ast \Psi_u +  \zeta\frac{\partial  \mathcal{J}_*}{\partial u}\Psi_u+\zeta  \mathcal{J}_*\Psi_{uu}\\
\tilde{\Psi}_{vv}&=&\zeta_v  \mathcal{J}_\ast \Psi_v+ \zeta\frac{\partial  \mathcal{J}_*}{\partial v}\Psi_v+\zeta  \mathcal{J}_*\Psi_{vv}\\
\nonumber \tilde{\Psi}_{uv}&=& \zeta_u  \mathcal{J}_\ast \Psi_v+ \zeta\frac{\partial  \mathcal{J}_*}{\partial u}\Psi_v+\zeta  \mathcal{J}_*\Psi_{uv}\\
\nonumber &=& \zeta_v  \mathcal{J}_\ast \Psi_u+ \zeta\frac{\partial  \mathcal{J}_*}{\partial v}\Psi_u +\zeta  \mathcal{J}_*\Psi_{uv}.\\\nonumber
 \end{eqnarray}
We can write
\begin{eqnarray}\label{jd2.6}
\nonumber \zeta  \mathcal{J}_\ast \Psi_u \times \left(\zeta_u  \mathcal{J}_\ast \Psi_u+\zeta\frac{\partial  \mathcal{J}_\ast}{\partial u}\Psi_u\right)
 &=&\zeta  \mathcal{J}_\ast \Psi_u \times \left(\zeta_u  \mathcal{J}_\ast \Psi_u+\zeta\frac{\partial  \mathcal{J}_\ast}{\partial u}\Psi_u+\zeta  \mathcal{J}_\ast \Psi_{uu}\right)\\
\nonumber &&-\zeta  \mathcal{J}_\ast (\Psi_u \times \Psi_{uu})\\
&=&\tilde{\Psi}_u \times \tilde{\Psi}_{uu}-\zeta  \mathcal{J}_\ast(\Psi_u \times \Psi_{uu}).
\end{eqnarray} 
Similarly
\begin{eqnarray}\label{j2.7}
\left\{
\begin{array}{ll}
\zeta  \mathcal{J}_\ast \Psi_u \times \left(\zeta_v  \mathcal{J}_\ast \Psi_u+\zeta\frac{\partial  \mathcal{J}_\ast}{\partial v}\Psi_u\right)=\tilde{\Psi}_u \times \tilde{\Psi}_{uv}-\zeta  \mathcal{J}_\ast(\Psi_u \times \Psi_{uv})\vspace{.2cm}\\
\zeta  \mathcal{J}_\ast \Psi_u \times \left(\zeta_v  \mathcal{J}_\ast \Psi_v+\zeta\frac{\partial  \mathcal{J}_\ast}{\partial v}\Psi_v\right)=\tilde{\Psi}_u \times \tilde{\Psi}_{vv}-\zeta  \mathcal{J}_\ast(\Psi_u \times \Psi_{vv})\vspace{.2cm}\\
\zeta  \mathcal{J}_\ast \Psi_v \times \left(\zeta_u  \mathcal{J}_\ast \Psi_u+\zeta\frac{\partial  \mathcal{J}_\ast}{\partial u}\Psi_u\right)=\tilde{\Psi}_v \times \tilde{\Psi}_{uu}-\zeta  \mathcal{J}_\ast(\Psi_v \times \Psi_{uu})\vspace{.2cm}\\
\zeta  \mathcal{J}_\ast \Psi_v \times \left(\zeta_u  \mathcal{J}_\ast \Psi_v+\zeta\frac{\partial  \mathcal{J}_\ast}{\partial u}\Psi_v\right)=\tilde{\Psi}_v \times \tilde{\Psi}_{uv}-\zeta  \mathcal{J}_\ast(\Psi_v \times \Psi_{uv})\vspace{.2cm}\\
\zeta  \mathcal{J}_\ast \Psi_v \times \left(\zeta_v  \mathcal{J}_\ast \Psi_v+\zeta\frac{\partial  \mathcal{J}_\ast}{\partial v}\Psi_v\right)=\tilde{\Psi}_v \times \tilde{\Psi}_{vv}-\zeta  \mathcal{J}_\ast(\Psi_v \times \Psi_{vv}).\vspace{.2cm}\\
\end{array}
\right.
\end{eqnarray} 
 Therefore in view of (\ref{lj2.2}), (\ref{jd2.6}) and (\ref{j2.7}), we have
\begin{eqnarray*}
\nonumber
\tilde{\beta}&=&\frac{\nu}{\kappa}\Big[u^{\prime \prime}\zeta  \mathcal{J}_\ast \Psi_u + v^{\prime\prime}\zeta  \mathcal{J}_\ast \Psi_v+{u^\prime}^2 \left(\zeta_u  \mathcal{J}_\ast \Psi_u +  \zeta\frac{\partial  \mathcal{J}_*}{\partial u}\Psi_u\right)+2{u^\prime} v^\prime \left(\zeta_u  \mathcal{J}_\ast \Psi_v +  \zeta\frac{\partial  \mathcal{J}_*}{\partial u}\Psi_v\right)\\
&&+{v^\prime}^2 \left(\zeta_v  \mathcal{J}_\ast \Psi_v +  \zeta\frac{\partial  \mathcal{J}_*}{\partial v}\Psi_v\right)\Big]+\frac{\eta}{\kappa}\Big[\{u'v''-u''v'\} \mathcal{J}_\ast {\bf N}+u'^3\zeta  \mathcal{J}_\ast (\Psi_u\times \Psi_{uu})\\
&&+2u'^2v'\zeta  \mathcal{J}_\ast ( \Psi_u\times \Psi_{uv})
+u'v'^2\zeta  \mathcal{J}_\ast (\Psi_u\times \Psi_{vv})+u'^2v'\zeta  \mathcal{J}_\ast (\Psi_v\times \Psi_{uu})\\
&&+2u'v'^2\zeta  \mathcal{J}_\ast (\Psi_v\times \Psi_{uv})+v'^3\zeta  \mathcal{J}_\ast( \Psi_v\times \Psi_{vv})\Big]+\frac{\eta}{\kappa}\Big[{u^\prime}^3 \zeta  \mathcal{J}_\ast \Psi_u \times \left(\zeta_u  \mathcal{J}_\ast \Psi_u +  \zeta\frac{\partial  \mathcal{J}_*}{\partial u}\Psi_u\right)\\
&&+2{u^\prime}^2 v^\prime \zeta  \mathcal{J}_\ast \Psi_u \times \left(\zeta_v  \mathcal{J}_\ast \Psi_u +  \zeta\frac{\partial  \mathcal{J}_*}{\partial v}\Psi_u\right)
+u^\prime {v^\prime}^2\zeta  \mathcal{J}_\ast \Psi_u \times \left(\zeta_v  \mathcal{J}_\ast \Psi_v +  \zeta\frac{\partial  \mathcal{J}_*}{\partial v}\Psi_v\right)\\
&&+{u^\prime}^2v^\prime \zeta  \mathcal{J}_\ast \Psi_v \times \left(\zeta_u  \mathcal{J}_\ast \Psi_u +  \zeta\frac{\partial  \mathcal{J}_*}{\partial u}\Psi_u\right)
+2u^\prime {v^\prime}^2\zeta  \mathcal{J}_\ast \Psi_v \times \left(\zeta_u  \mathcal{J}_\ast \Psi_v +  \zeta\frac{\partial  \mathcal{J}_*}{\partial u}\Psi_v\right)\\
&&+{v^\prime}^3\zeta  \mathcal{J}_\ast \Psi_v \times \left(\zeta_v  \mathcal{J}_\ast \Psi_v +  \zeta\frac{\partial  \mathcal{J}_*}{\partial v}\Psi_v\right)\Big]
\end{eqnarray*}
which can be written as
\begin{eqnarray*}
\nonumber\beta(s)&=&\frac{\tilde{\nu}(s)}{\tilde{\kappa}(s)}\left[(u^{\prime\prime}\tilde{\Psi}_u+v^{\prime\prime} \tilde{\Psi}_v)+({u^\prime}^2 \tilde{\Psi}_{uu}+2u^\prime v^\prime \tilde{\Psi}_{uv}+{v^\prime}^2 \tilde{\Psi}_{vv})\right]\\
&&+\frac{\tilde{\eta}(s)}{\tilde{\kappa}(s)}\Big[\{u'v''-u''v'\}\tilde{{\bf N}}+u'^3\Psi_u\times \tilde{\Psi}_{uu}+2u'^2v'\tilde{\Psi}_u\times \tilde{\Psi}_{uv}\\
\nonumber
&&+u'v'^2\tilde{\Psi}_u\times \tilde{\Psi}_{vv}+u'^2v'\tilde{\Psi}_v \times \tilde{\Psi}_{uu}+2u'v'^2\tilde{\Psi}_v\times \tilde{\Psi}_{uv}+v'^3\tilde{\Psi}_v\times \tilde{\Psi}_{vv}\Big]
\end{eqnarray*}
or
\begin{equation*}
\tilde{\beta}(s)=\frac{\tilde{\nu}(s)}{\tilde{\kappa}(s)}\tilde{\vec{\mathfrak{n}}}(s)+\frac{\tilde{\eta}(s)}{\tilde{\kappa}(s)}\tilde{\vec{\mathfrak{b}}}(s)
\end{equation*}
for some $C^\infty$ functions $\tilde{\nu}(s)$ and $\tilde{\eta}(s).$ Here and now onward, we assume that $\frac{\tilde{\nu}}{\tilde{\kappa}}=\frac{\nu}{\kappa}$ and $\frac{\tilde{\eta}}{\tilde{\kappa}}=\frac{\eta}{\kappa}$ . Thus $\tilde{\beta}(s)$ is a normal curve.
\end{proof}
\begin{corollary}
Let $ \mathcal{J}:{\mathcal{S}}\rightarrow \tilde{\mathcal{S}}$ be a homothetic conformal map, where ${\mathcal{S}}$ and $\tilde{\mathcal{S}}$ are smooth surfaces and $\beta(s)$ be a normal curve on ${\mathcal{S}}$. Then $\tilde{\beta}(s)$ is a normal curve on $\tilde{\mathcal{S}}$ if
\begin{eqnarray*}\label{j2.2}
\nonumber \tilde{\beta}&=&\frac{\nu}{\kappa}\Big[{u^\prime}^2c\left(  \mathcal{J}_\ast\right)_u\Psi_u+{v^\prime}^2c\left(  \mathcal{J}_\ast\right)_v \Psi_v +2u^\prime v^\prime c\left(  \mathcal{J}_\ast\right)_u \Psi_v\Big]+\frac{\eta}{\kappa}\Big[{u^\prime}^3 c  \mathcal{J}_\ast \Psi_u \times c\left(  \mathcal{J}_\ast\right)_u \Psi_u \\
&&+2{u^\prime}^2 v^\prime c  \mathcal{J}_\ast \Psi_u \times c\left(  \mathcal{J}_\ast\right)_v \Psi_u +
u^\prime {v^\prime}^2c  \mathcal{J}_\ast \Psi_u \times c\left(  \mathcal{J}_\ast \right)_v \Psi_v+{u^\prime}^2v^\prime c  \mathcal{J}_\ast \Psi_v \times c\left(  \mathcal{J}_\ast\right)_u\Psi_u \\
&&+2u^\prime {v^\prime}^2c  \mathcal{J}_\ast \Psi_v \times c\left(  \mathcal{J}_\ast \right)_u \Psi_v+{v^\prime}^3c  \mathcal{J}_\ast \Psi_v \times c\left(  \mathcal{J}_\ast \right)_v\Psi_v\Big]+c  \mathcal{J}_\ast(\beta).
\end{eqnarray*}
\end{corollary}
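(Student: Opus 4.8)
The plan is to obtain the Corollary as a direct specialization of the Theorem, exploiting the single feature that separates a homothety from a general conformal map: the dilation factor is now constant. Recall from the introduction that a homothetic transformation is exactly the case $\zeta(u,v)\equiv c$ with $c\neq\{0,1\}$, so the entire argument amounts to substituting this constant factor into the identity (\ref{lj2.2}) and simplifying.

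First I would record that, because $c$ is constant, its partial derivatives vanish: $\zeta_u=\zeta_v=0$. The product rule applied to the coefficients occurring in (\ref{lj2.2}) then yields
\[
(\zeta \mathcal{J}_\ast)_u=\zeta_u \mathcal{J}_\ast+\zeta(\mathcal{J}_\ast)_u=c(\mathcal{J}_\ast)_u,
\]
and likewise $(\zeta \mathcal{J}_\ast)_v=c(\mathcal{J}_\ast)_v$. In the notation of (\ref{q3.7}) this says that every expression of the shape $\zeta_u \mathcal{J}_\ast\Psi_u+\zeta\frac{\partial \mathcal{J}_\ast}{\partial u}\Psi_u$ collapses to $c\frac{\partial \mathcal{J}_\ast}{\partial u}\Psi_u=c(\mathcal{J}_\ast)_u\Psi_u$, the derivative-of-$\zeta$ contribution being annihilated.

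Next I would insert these simplified coefficients into each term of (\ref{lj2.2}). In the $\nu/\kappa$ bracket each factor $(\zeta \mathcal{J}_\ast)_u$ or $(\zeta \mathcal{J}_\ast)_v$ turns into $c(\mathcal{J}_\ast)_u$ or $c(\mathcal{J}_\ast)_v$; in the $\eta/\kappa$ bracket each of the six cross-product terms carries two factors of $\zeta$---one inside $\zeta \mathcal{J}_\ast$ and one inside the differentiated slot---so both become $c$; and the closing term $\zeta \mathcal{J}_\ast(\beta)$ becomes $c \mathcal{J}_\ast(\beta)$. Collecting everything reproduces precisely the displayed formula of the Corollary.

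Finally, the assertion that $\tilde{\beta}(s)$ is a normal curve follows verbatim from the Theorem: using (\ref{2.2}), (\ref{2.3}) and (\ref{q3.7}) together with the tilde-analogues of (\ref{2}) and (\ref{3}), the right-hand side regroups into $\frac{\tilde{\nu}}{\tilde{\kappa}}\tilde{\vec{\mathfrak{n}}}+\frac{\tilde{\eta}}{\tilde{\kappa}}\tilde{\vec{\mathfrak{b}}}$ under the standing assumptions $\tilde{\nu}/\tilde{\kappa}=\nu/\kappa$ and $\tilde{\eta}/\tilde{\kappa}=\eta/\kappa$, so $\tilde{\beta}$ lies in the normal plane of $\tilde{\mathcal{S}}$. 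The only point that will demand care is the bookkeeping: one must verify that the constant $c$ factors out cleanly in all six cross-product terms and that no residual $\zeta_u$ or $\zeta_v$ has been silently retained. Beyond this there is no new geometric content, which is exactly what makes the statement a genuine corollary rather than an independent result.
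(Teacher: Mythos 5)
Your proposal is correct and follows essentially the same route as the paper: the paper's proof is simply to substitute $\zeta(u,v)=c$ into (\ref{lj2.2}), which is exactly what you do. Your extra bookkeeping---noting $\zeta_u=\zeta_v=0$ so that $(\zeta\mathcal{J}_\ast)_u=c(\mathcal{J}_\ast)_u$ and $(\zeta\mathcal{J}_\ast)_v=c(\mathcal{J}_\ast)_v$---just makes explicit what the paper leaves implicit.
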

\begin{proof}
In case of a homothetic map the dilation function $\zeta(u,v)=c \neq \{0,1\}$. Substituting in (\ref{lj2.2}), we get the above expression.  
\end{proof}
\begin{corollary}\cite{9}
Let $ \mathcal{J}:{\mathcal{S}}\rightarrow \tilde{\mathcal{S}}$ be an isometry, where ${\mathcal{S}}$ and $\tilde{\mathcal{S}}$ are smooth surfaces and $\beta(s)$ be a normal curve on ${\mathcal{S}}$. Then $\tilde{\beta}(s)$ is a normal curve on $\tilde{\mathcal{S}}$ if
\begin{eqnarray*}\label{j2.2}
\nonumber \tilde{\beta}&=&\frac{\nu}{\kappa}\Big[{u^\prime}^2\frac{\partial  \mathcal{J}_\ast}{\partial u}\Psi_u+{v^\prime}^2\frac{\partial  \mathcal{J}_\ast}{\partial v} \Psi_v +2u^\prime v^\prime \frac{\partial  \mathcal{J}_\ast}{\partial u} \Psi_v\Big]+\frac{\eta}{\kappa}\Big[{u^\prime}^3   \mathcal{J}_\ast \Psi_u \times \frac{\partial  \mathcal{J}_\ast}{\partial u} \Psi_u \\
&&+2{u^\prime}^2 v^\prime   \mathcal{J}_\ast \Psi_u \times \frac{\partial  \mathcal{J}_\ast}{\partial v} \Psi_u +
u^\prime {v^\prime}^2  \mathcal{J}_\ast \Psi_u \times \frac{\partial  \mathcal{J}_\ast}{\partial v} \Psi_v+{u^\prime}^2v^\prime   \mathcal{J}_\ast \Psi_v \times \frac{\partial  \mathcal{J}_\ast}{\partial u}\Psi_u \\
&&+2u^\prime {v^\prime}^2  \mathcal{J}_\ast \Psi_v \times \frac{\partial  \mathcal{J}_\ast}{\partial u} \Psi_v+{v^\prime}^3  \mathcal{J}_\ast \Psi_v \times \frac{\partial  \mathcal{J}_\ast}{\partial v}\Psi_v\Big]+  \mathcal{J}_\ast(\beta).
\end{eqnarray*}
\end{corollary}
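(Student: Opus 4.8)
The plan is to derive this statement as a direct specialization of the main Theorem, in exactly the way the homothetic Corollary was obtained by putting $\zeta \equiv c$. The governing observation, already recorded in the Introduction, is that an isometry is precisely a conformal map whose dilation factor satisfies $\zeta(u,v) \equiv 1$. Hence the entire argument reduces to substituting $\zeta = 1$ into the master formula (\ref{lj2.2}) and simplifying its coefficients.

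First I would use that $\zeta \equiv 1$ is constant, so its partial derivatives vanish, $\zeta_u = \zeta_v = 0$. Applying the product rule to the scaled differential $\zeta \mathcal{J}_\ast$ then gives
$$\left(\zeta \mathcal{J}_\ast\right)_u = \zeta_u \mathcal{J}_\ast + \zeta \frac{\partial \mathcal{J}_\ast}{\partial u} = \frac{\partial \mathcal{J}_\ast}{\partial u}, \qquad \left(\zeta \mathcal{J}_\ast\right)_v = \frac{\partial \mathcal{J}_\ast}{\partial v},$$
while the scalar prefactor collapses to $\zeta \mathcal{J}_\ast = \mathcal{J}_\ast$, both inside each cross product and in the trailing term $\zeta \mathcal{J}_\ast(\beta)$.

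With these replacements I would substitute term by term into (\ref{lj2.2}): the three summands of the $\frac{\nu}{\kappa}$ bracket, the six cross-product summands of the $\frac{\eta}{\kappa}$ bracket, and the final $\mathcal{J}_\ast(\beta)$ all reproduce verbatim the expression claimed in the statement. Because the master formula was already shown in the Theorem to force $\tilde{\beta}$ into the form $\frac{\tilde{\nu}}{\tilde{\kappa}}\tilde{\vec{\mathfrak{n}}} + \frac{\tilde{\eta}}{\tilde{\kappa}}\tilde{\vec{\mathfrak{b}}}$ (under the standing assumption $\tilde{\nu}/\tilde{\kappa} = \nu/\kappa$ and $\tilde{\eta}/\tilde{\kappa} = \eta/\kappa$), the same reasoning shows that $\tilde{\beta}$ is a normal curve on $\tilde{\mathcal{S}}$.

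I expect essentially no obstacle here: the content is the bookkeeping of setting $\zeta = 1$. The single point deserving care is the product-rule simplification of $\left(\zeta \mathcal{J}_\ast\right)_u$ --- confirming that with a constant dilation factor the derivative of the scaled differential collapses to the derivative of $\mathcal{J}_\ast$ alone. Once that identity is in place, the corollary is immediate from the Theorem, and it recovers the isometric result of \cite{9} as promised.
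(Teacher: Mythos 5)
Your proposal is correct and matches the paper's own proof, which likewise obtains this corollary by setting the dilation factor $\zeta \equiv 1$ in the master formula (\ref{lj2.2}) of the main Theorem. Your explicit product-rule check that $\left(\zeta\,\mathcal{J}_\ast\right)_u = \frac{\partial \mathcal{J}_\ast}{\partial u}$ when $\zeta$ is constant is a small but welcome addition of care that the paper leaves implicit.
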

\begin{proof}
A conformal transformation is the composition of a dilation function and an isometry. Substituting $\zeta=1$ in (\ref{lj2.2}), we get the above expression.  
\end{proof}
\begin{theorem}
Let ${\mathcal{S}}$ and $\tilde{\mathcal{S}}$ be two conformal smooth surfaces and $\beta(s)$ be a normal curve on ${\mathcal{S}}$. Then for the normal component along the surface normal, we have
\begin{equation}\label{x8}
\tilde{\beta}\cdot {\tilde{\bf N}}-\zeta^4 \beta \cdot {\bf N}=\frac{\nu}{\kappa}(\tilde{\kappa}_n-\zeta^4 \kappa_n)+\frac{\eta}{\kappa}h(\mathcal{E},\mathcal{G},\mathcal{F},\zeta),
\end{equation}
where 
\begin{equation}\label{q9}
h(\mathcal{E},\mathcal{G},\mathcal{F},\zeta)= \Big[{u^\prime}^3 \theta_{11}^2 -{v^\prime}^3 \theta_{22}^1+2{u^\prime}^2v^\prime \theta_{12}^2\\
 +u^\prime {v^\prime}^2 \theta_{22}^2-{u^\prime}^2v^\prime \theta_{11}^1+2u^\prime {v^\prime}^2 \theta_{12}^1\Big] W^2.
\end{equation}
\end{theorem}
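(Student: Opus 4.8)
The plan is to evaluate the scalar products $\tilde{\beta}\cdot\tilde{\bf N}$ and $\beta\cdot{\bf N}$ separately from the position-vector representation $(\ref{2.1})$ together with the image formula $(\ref{lj2.2})$, and then to compare them. First I would dot the expression $(\ref{2.1})$ for $\beta$ with the unit surface normal ${\bf N}$. Since $\Psi_u$ and $\Psi_v$ are tangent to ${\mathcal{S}}$, they are annihilated by ${\bf N}$, so the first bracket $u''\Psi_u+v''\Psi_v+{u'}^2\Psi_{uu}+2u'v'\Psi_{uv}+{v'}^2\Psi_{vv}=\kappa\vec{\mathfrak{n}}$ contributes only through its second-order part, and using $\Psi_{uu}\cdot{\bf N}=\mathcal{L}$, $\Psi_{uv}\cdot{\bf N}=\mathcal{M}$, $\Psi_{vv}\cdot{\bf N}=\mathcal{N}$ this yields exactly $\kappa_n$ by $(\ref{se1})$. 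Hence the $\tfrac{\nu}{\kappa}$-part of $\beta\cdot{\bf N}$ equals $\tfrac{\nu}{\kappa}\kappa_n$, and the analogous computation on $\tilde{\beta}$ gives $\tfrac{\tilde{\nu}}{\tilde{\kappa}}\tilde{\kappa}_n$. Invoking the identifications $\tfrac{\tilde{\nu}}{\tilde{\kappa}}=\tfrac{\nu}{\kappa}$ and $\tfrac{\tilde{\eta}}{\tilde{\kappa}}=\tfrac{\eta}{\kappa}$ established in the first theorem of this section, these two contributions combine, after subtracting $\zeta^4\beta\cdot{\bf N}$, into the term $\tfrac{\nu}{\kappa}(\tilde{\kappa}_n-\zeta^4\kappa_n)$ on the right-hand side of $(\ref{x8})$.

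For the binormal part I would substitute ${\bf N}=(\Psi_u\times\Psi_v)/W$, with $W=\sqrt{\mathcal{E}\mathcal{G}-\mathcal{F}^2}$, into the $\tfrac{\eta}{\kappa}$-bracket of $(\ref{2.1})$ and reduce each of the six products $(\Psi_i\times\Psi_{jk})\cdot(\Psi_u\times\Psi_v)$ by the Lagrange identity $(\mathbf{a}\times\mathbf{b})\cdot(\mathbf{c}\times\mathbf{d})=(\mathbf{a}\cdot\mathbf{c})(\mathbf{b}\cdot\mathbf{d})-(\mathbf{a}\cdot\mathbf{d})(\mathbf{b}\cdot\mathbf{c})$. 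Each such product collapses to a combination of $\mathcal{E},\mathcal{F},\mathcal{G}$ and the inner products $\Psi_{jk}\cdot\Psi_u$, $\Psi_{jk}\cdot\Psi_v$, that is, of the first derivatives of the metric coefficients, which are precisely the data out of which the Christoffel-type coefficients $\theta_{ij}^k$ are built. The remaining summand $\{u'v''-u''v'\}\,{\bf N}\cdot{\bf N}=\{u'v''-u''v'\}$ produces no cubic monomial in $(u',v')$ and, as explained below, drops out of the final comparison.

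I would then repeat the computation for $\tilde{\beta}\cdot\tilde{\bf N}$, using $\tilde{\bf N}=(\tilde{\Psi}_u\times\tilde{\Psi}_v)/\tilde{W}$ together with the conformal relations $\tilde{\mathcal{E}}=\zeta^2\mathcal{E}$, $\tilde{\mathcal{F}}=\zeta^2\mathcal{F}$, $\tilde{\mathcal{G}}=\zeta^2\mathcal{G}$, which give $\tilde{W}=\zeta^2 W$, and the derivative formulas $(\ref{q3.7})$ for $\tilde{\Psi}_{uu},\tilde{\Psi}_{uv},\tilde{\Psi}_{vv}$ together with the cross-product reductions $(\ref{jd2.6})$--$(\ref{j2.7})$. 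The key structural observation is that under the conformal change of metric the associated Christoffel symbols transform by a correction built only from logarithmic derivatives of $\zeta$; collecting these corrections into the $\theta_{ij}^k$ and factoring out the common $W^2$, the difference of the binormal parts of $\tilde{\beta}\cdot\tilde{\bf N}$ and $\zeta^4\beta\cdot{\bf N}$ assembles into $\tfrac{\eta}{\kappa}\,h(\mathcal{E},\mathcal{G},\mathcal{F},\zeta)$ with $h$ exactly as in $(\ref{q9})$. In that same difference the two $\{u'v''-u''v'\}$ contributions cancel, which is why $h$ contains only the six cubic monomials ${u'}^3,{v'}^3,{u'}^2v',u'{v'}^2$.

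The main obstacle is the bookkeeping in this last step: one must track every factor of $\zeta,\zeta_u,\zeta_v$ generated by $(\ref{q3.7})$, the factor $\zeta^2$ coming from $\tilde{W}$, and the overall $\zeta^4$ scaling, and then verify that after the Lagrange reductions precisely the stated coefficients $\theta_{ij}^k$ survive, while all lower-order metric terms either cancel against their $\zeta^4$-weighted counterparts or recombine into the $\theta_{ij}^k$. Pinning down the exact form of $\theta_{ij}^k$ in terms of $\mathcal{E},\mathcal{F},\mathcal{G},\zeta$ and their first derivatives, and confirming the clean cancellation of the normal monomial, is the delicate part; everything else is a direct, if lengthy, expansion.
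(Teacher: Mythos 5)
Your sketch is correct and follows essentially the same route as the paper's own proof: the $\nu/\kappa$ part of $\beta\cdot{\bf N}$ yields $\kappa_n$, the $\eta/\kappa$ part reduces via the Lagrange identity and the relations $\Psi_{uu}\cdot\Psi_u=\tfrac{\mathcal{E}_u}{2}$, etc., to the Christoffel-symbol expression (\ref{x13}), and the comparison with $\tilde{\beta}\cdot\tilde{{\bf N}}$ is carried out through the conformal transformation law $\tilde{\mathtt{\Gamma}}_{ij}^k=\mathtt{\Gamma}_{ij}^k+\theta_{ij}^k$ of (\ref{q13})--(\ref{q16}), with the $(u'v''-u''v')$ terms cancelling because $\tilde{W}^2=\zeta^4W^2$, exactly as you note. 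The $\zeta$-bookkeeping you flag as the delicate point is genuinely where care is needed --- carried out strictly, the surviving $\theta$-terms arrive multiplied by $\tilde{W}^2=\zeta^4W^2$, so the factor in $h$ should arguably be $\zeta^4W^2$ rather than $W^2$, a discrepancy that is present in the paper's own final step as well --- but your overall plan coincides with the paper's argument step for step.
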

\begin{proof}
Let $\tilde{\mathcal{S}}$ be the conformal image of ${\mathcal{S}}$ and $\Psi(u,v)$ and $\tilde{\Psi}(u,v)= \mathcal{J}\circ \Psi(u,v) $ be the surface patches of ${\mathcal{S}}$ and $\tilde{{\mathcal{S}}},$ respectively. We can easily find
\begin{eqnarray*}
\beta \cdot {\bf N} &=&\frac{\nu}{\kappa}\Big[{u^\prime}^2 \Psi_{uu}\cdot (\Psi_u \times \Psi_v)+{v^\prime}^2 \Psi_{vv}\cdot (\Psi_u \times \Psi_v)+2u^\prime v^\prime \Psi_{uv}\cdot (\Psi_u \times \Psi_v)\Big]\\
&&+\frac{\eta}{\kappa}\Big[(u^\prime v^{\prime\prime}-v^\prime u^{\prime\prime})(\mathcal{E}\mathcal{G}-\mathcal{F}^2)+{u^\prime}^3(\Psi_u \times \Psi_{uu})\cdot (\Psi_u \times \Psi_v)\\
&&+2{u^\prime}^2v^\prime (\Psi_u \times \Psi_{uv})\cdot (\Psi_u \times \Psi_v)
+{v^\prime}^2u^\prime (\Psi_u \times \Psi_{vv})\cdot (\Psi_u \times \Psi_v)\\
&&+{u^\prime}^2v^\prime (\Psi_v \times \Psi_{uu})\cdot (\Psi_u \times \Psi_v)
+2{u^\prime}{v^\prime}^2 (\Psi_v \times \Psi_{uv})\cdot (\Psi_u \times \Psi_v)\\
&&+{v^\prime}^3 (\Psi_v \times \Psi_{vv})\cdot (\Psi_u \times \Psi_v)\Big]
\end{eqnarray*}
or
\begin{eqnarray*}
\beta \cdot {\bf N} &=&\frac{\nu}{\kappa}\Big[{u^\prime}^2 \mathcal{L}+{v^\prime}^2 \mathcal{N}+2u^\prime v^\prime \mathcal{M}\Big]
+\frac{\eta}{\kappa}\Big[(u^\prime v^{\prime\prime}-v^\prime u^{\prime\prime})(\mathcal{E}\mathcal{G}-\mathcal{F}^2)
+{u^\prime}^3\{\mathcal{E}(\Psi_{uu}\cdot \Psi_v)\\
&&-\mathcal{F}(\Psi_{uu}\cdot\Psi_u)\}+2{u^\prime}^2 v^\prime\{\mathcal{E}(\Psi_{uv}\cdot \Psi_v)-\mathcal{F}(\Psi_{uv}\cdot\Psi_u)\}+{u^\prime}{v^\prime}^2\{\mathcal{E}(\Psi_{vv}\cdot \Psi_v)\\
&&-\mathcal{F}(\Psi_{vv}\cdot\Psi_u)\}+{u^\prime}^2v^\prime\{\mathcal{F}(\Psi_{uu}\cdot \Psi_v)-\mathcal{G}(\Psi_{uu}\cdot\Psi_u)\}+2{u^\prime}{v^\prime}^2\{\mathcal{F}(\Psi_{uv}\cdot \Psi_v)\\
&&-\mathcal{G}(\Psi_{uv}\cdot\Psi_u)\}+{v^\prime}^3\{\mathcal{F}(\Psi_{vv}\cdot \Psi_v)-\mathcal{G}(\Psi_{vv}\cdot\Psi_u)\}\Big].
\end{eqnarray*}
We know that $\mathcal{E}_u= (\Psi_u \cdot \Psi_u)_u=\frac{1}{2}\Psi_{uu}\cdot\Psi_u,$ or 
\begin{equation}\label{1.16}
\Psi_{uu}\cdot\Psi_u=\frac{\mathcal{E}_u}{2}.
\end{equation}
On the similar lines, we can find
\begin{eqnarray}\label{1.17}
\left\{
\begin{array}{ll}
\Psi_{uu} \cdot \Psi_v =\mathcal{F}_u-\frac{\mathcal{E}_v}{2}, \Psi_{vv}\cdot \Psi_v =\frac{\mathcal{G}_v}{2},\Psi_{vv}\cdot \Psi_u= \mathcal{F}_v-\frac{\mathcal{G}_u}{2},\\
\Psi_{uv}\cdot \Psi_v =\frac{\mathcal{G}_u}{2}, \Psi_{uv}\cdot \Psi_u =\frac{\mathcal{E}_v}{2}.
\end{array}
\right.
\end{eqnarray}
Therefore in view of (\ref{1.16}) and (\ref{1.17}), $\beta \cdot {\bf N}$ turns out to be
\begin{eqnarray*}
\beta \cdot {\bf N} &=&\frac{\nu}{\kappa}\Big[{u^\prime}^2 \mathcal{L}+{v^\prime}^2 \mathcal{N}+2u^\prime v^\prime \mathcal{M}\Big]\\
&&+\frac{\eta}{\kappa}\Big[(u^\prime v^{\prime\prime}-v^\prime u^{\prime\prime})(\mathcal{E}\mathcal{G}-\mathcal{F}^2)
+{u^\prime}^3\left \{\mathcal{E}\left(\mathcal{F}_u-\frac{\mathcal{E}_v}{2}\right)-\frac{\mathcal{F}\mathcal{E}_u}{2}\right \}\\
&&+2{u^\prime}^2 v^\prime \left \{\frac{\mathcal{E}\mathcal{G}_u}{2}-\frac{\mathcal{F}\mathcal{E}_v}{2}\right \}
+{u^\prime}{v^\prime}^2 \left \{\frac{\mathcal{E}\mathcal{G}_v}{2}-\mathcal{F}\left(\mathcal{F}_v-\frac{\mathcal{G}_u}{2}\right)\right\}\\
&&+{u^\prime}^2v^\prime \left \{\mathcal{F}\left(\mathcal{F}_u-\frac{\mathcal{E}_v}{2}\right)-\frac{\mathcal{G}\mathcal{E}_u}{2}\right \}+2{u^\prime}{v^\prime}^2 \left \{\frac{\mathcal{F}\mathcal{G}_u}{2} - \frac{\mathcal{G}\mathcal{E}_v}{2} \right\}\\
&&+{v^\prime}^3 \left \{\frac{\mathcal{F}\mathcal{G}_v}{2}-\mathcal{G}\left(\mathcal{F}_v-\frac{\mathcal{G}_u}{2}\right)\right \}\Big]
\end{eqnarray*}
or
\begin{eqnarray}\nonumber
\beta \cdot {\bf N}&=&\frac{\nu}{\kappa}\kappa_n+\frac{\eta}{\kappa}(\mathcal{E}\mathcal{G}-\mathcal{F}^2)\Big[(u^\prime v^{\prime \prime}-v^\prime u^{\prime \prime})+{u^\prime}^3\mathtt{\Gamma}_{11}^2 -{v^\prime}^3 \mathtt{\Gamma}_{22}^1+2{u^\prime}^2v^\prime \mathtt{\Gamma}_{12}^2\\
\label{x13}&& +u^\prime {v^\prime}^2 \mathtt{\Gamma}_{22}^2-{u^\prime}^2v^\prime \mathtt{\Gamma}_{11}^1+2u^\prime {v^\prime}^2\mathtt{\Gamma}_{12}^1\Big],
\end{eqnarray}
where $\mathtt{\Gamma}_{ij}^k,(i,j,k=1,2)$ are Christoffel symbols of second kind given by:
\begin{equation}\label{q12}\left\{
\begin{array}{ll}
\mathtt{\Gamma}_{11}^1=\frac{1}{2W^2}\left\{\mathcal{G}\mathcal{E}_u+\mathcal{F}[\mathcal{E}_v-2\mathcal{F}_u]\right\}, \quad \mathtt{\Gamma}_{22}^2=\frac{1}{2W^2}\left\{\mathcal{E}\mathcal{G}_v+\mathcal{F}[\mathcal{G}_v-2\mathcal{F}_v]\right\}\\
\mathtt{\Gamma}_{11}^2=\frac{1}{2W^2}\left\{\mathcal{E}[2\mathcal{F}_u-\mathcal{E}_v]-\mathcal{F}\mathcal{E}_v\right\},\quad \mathtt{\Gamma}_{22}^1=\frac{1}{2W^2}\left\{\mathcal{G}[2\mathcal{F}_v-\mathcal{G}_u]-\mathcal{F}\mathcal{G}_v\right\}\\
\mathtt{\Gamma}_{12}^2=\frac{1}{2W^2}\left\{\mathcal{E}\mathcal{G}_u-\mathcal{F}\mathcal{E}_v\right\}=\mathtt{\Gamma}_{21}^2,\quad
\mathtt{\Gamma}_{21}^1=\frac{1}{2W^2}\left\{\mathcal{G}\mathcal{E}_v-\mathcal{F}\mathcal{G}_u\right\}=\mathtt{\Gamma}_{12}^1
\end{array}
\right.
\end{equation}
and $W=\sqrt{\mathcal{E}\mathcal{G}-\mathcal{F}^2}$.

Under conformal motion, we have
\begin{equation}\label{z2.8}
\zeta^2\mathcal{E}=\tilde{\mathcal{E}},\quad  \zeta^2\mathcal{F}=\tilde{\mathcal{F}},\quad  \zeta^2\mathcal{G}=\tilde{\mathcal{G}}.
\end{equation}
This implies that
\begin{eqnarray}\label{12a}
\left\{
\begin{array}{ll}
\tilde{\mathcal{E}}_u=2\zeta \zeta_u \mathcal{E} + \zeta^2 \mathcal{E}_u, \quad \tilde{\mathcal{E}}_v=2\zeta \zeta_v \mathcal{E} + \zeta^2 \mathcal{E}_v,\\
\tilde{\mathcal{F}}_u=2\zeta \zeta_u \mathcal{F} + \zeta^2 \mathcal{F}_u, \quad \tilde{\mathcal{F}}_v=2\zeta \zeta_v \mathcal{F} + \zeta^2 \mathcal{F}_v,\\
\tilde{\mathcal{G}}_u=2\zeta \zeta_u \mathcal{G} + \zeta^2 \mathcal{G}_u, \quad \tilde{\mathcal{G}}_v=2\zeta \zeta_v \mathcal{G} + \zeta^2 \mathcal{G}_v.
\end{array}
\right.
\end{eqnarray}
After the conformal motion, the Christoffel symbols turn out to be
\begin{eqnarray}\label{q13}
\begin{array}{ll}
\tilde{\mathtt{\Gamma}}_{11}^1=\mathtt{\Gamma}_{11}^1 +\theta_{11}^1,\quad \tilde{\mathtt{\Gamma}}_{11}^2=\mathtt{\Gamma}_{11}^2 +\theta_{11}^2, \quad \tilde{\mathtt{\Gamma}}_{12}^1=\mathtt{\Gamma}_{12}^1 +\theta_{12}^1,\\
\tilde{\mathtt{\Gamma}}_{12}^2=\mathtt{\Gamma}_{12}^2 +\theta_{12}^2,\quad \tilde{\mathtt{\Gamma}}_{22}^1=\mathtt{\Gamma}_{22}^1 +\theta_{22}^1, \quad \tilde{\mathtt{\Gamma}}_{22}^2=\mathtt{\Gamma}_{22}^2 +\theta_{22}^2,
\end{array}
\end{eqnarray}
where
\begin{eqnarray}\label{q16}
\left\{\begin{array}{ll}
\theta_{11}^1=
\frac{\mathcal{E}\mathcal{G}\zeta_u -2\mathcal{F}^2\zeta_u +\mathcal{F}\mathcal{E}\zeta_v}{\zeta W^2},\quad \theta_{11}^2=
\frac{\mathcal{E}\mathcal{F}\zeta_u -\mathcal{E}^2 \zeta_v}{\zeta W^2},\vspace{.1cm}\\
\theta_{12}^1=\frac{\mathcal{E}\mathcal{G}\zeta_v-\mathcal{F}\mathcal{G}\zeta_u}{\zeta W^2},
\quad \theta_{12}^2=\frac{\mathcal{E}\mathcal{G}\zeta_u - \mathcal{F}\mathcal{E} \zeta_v}{\zeta W^2},\vspace{.1cm}\\
\theta_{22}^1=\frac{\mathcal{G}\mathcal{F}\zeta_v - \mathcal{G}^2 \zeta_u}{\zeta W^2},\quad \theta_{22}^2=\frac{\mathcal{E}\mathcal{G}\zeta_v -2\mathcal{F}^2 \zeta_v +\mathcal{F}\mathcal{G}\zeta_u}{\zeta W^2}.
\end{array}\right.
\end{eqnarray}
Now if $\beta$ is a normal curve on $\tilde{\mathcal{S}}$, in view of (\ref{x13}), (\ref{q13}) and (\ref{q16}), we get
\begin{equation}
\tilde{\beta}\cdot {\tilde{\bf N}}-\zeta^4 \beta \cdot {\bf N}=\frac{\nu}{\kappa}(\tilde{\kappa}_n-\zeta^4 \kappa_n)+\frac{\eta}{\kappa}\Big[{u^\prime}^3 \theta_{11}^2 -{v^\prime}^3 \theta_{22}^1+2{u^\prime}^2v^\prime \theta_{12}^2\\
 +u^\prime {v^\prime}^2 \theta_{22}^2-{u^\prime}^2v^\prime \theta_{11}^1+2u^\prime {v^\prime}^2 \theta_{12}^1\Big] W^2.
\end{equation}
This proves the claim.
\end{proof}
\begin{corollary}
Let ${\mathcal{S}}$ and $\tilde{\mathcal{S}}$ be two homothetic conformal smooth surfaces and $\beta(s)$ be a normal curve on ${\mathcal{S}}$. Then  for the normal component along the surface normal, we have
\begin{equation}\label{x3.19}
\tilde{\beta}\cdot {\tilde{\bf N}}-c^4 \left(\beta \cdot {\bf N}\right)=\frac{\nu}{\kappa}(\tilde{\kappa}_n-c^4 \kappa_n).
\end{equation}
Moreover, this normal component is conformally invariant if the position vector of $\beta$ is in the binormal direction or the normal curvature is conformally invariant.
\end{corollary}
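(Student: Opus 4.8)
The plan is to obtain (\ref{x3.19}) as a direct specialization of the master identity (\ref{x8}) established in the preceding theorem, rather than recomputing $\tilde{\beta}\cdot\tilde{\bf N}$ from scratch. The only structural input needed is the precise way the dilation factor enters the correction term $h(\mathcal{E},\mathcal{G},\mathcal{F},\zeta)$ of (\ref{q9}), which is built entirely out of the quantities $\theta_{ij}^k$ listed in (\ref{q16}).

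First I would set $\zeta(u,v)\equiv c$, the defining property of a homothetic map, and record that consequently $\zeta_u=\zeta_v=0$. Inspecting (\ref{q16}), every $\theta_{ij}^k$ is a rational expression of the form $(\text{numerator})/(\zeta W^2)$ whose numerator is linear and homogeneous in $\zeta_u$ and $\zeta_v$; hence each $\theta_{ij}^k$ vanishes identically under the homothety. Since $h(\mathcal{E},\mathcal{G},\mathcal{F},\zeta)$ in (\ref{q9}) is a $W^2$-weighted linear combination of exactly these six symbols, it follows that $h\equiv 0$. Substituting $\zeta=c$ and $h=0$ into (\ref{x8}) collapses the right-hand side to its first term $\tfrac{\nu}{\kappa}(\tilde{\kappa}_n-c^4\kappa_n)$, which is precisely (\ref{x3.19}).

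For the moreover assertion I would simply read off when the surviving right-hand side vanishes, i.e. when $\tfrac{\nu}{\kappa}(\tilde{\kappa}_n-c^4\kappa_n)=0$. Two independent sufficient conditions present themselves. If the position vector of $\beta$ lies in the binormal direction, then in the decomposition (\ref{1}) the normal coefficient satisfies $\nu\equiv 0$, killing the whole term. Alternatively, if the normal curvature obeys the corresponding invariance relation $\tilde{\kappa}_n=c^4\kappa_n$, the parenthetical factor vanishes. In either case (\ref{x3.19}) reduces to $\tilde{\beta}\cdot\tilde{\bf N}=c^4\,(\beta\cdot{\bf N})$, so the normal component transforms by the pure homothety factor with no residual distortion, which is the claimed invariance.

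I do not anticipate any serious obstacle: the argument is an honest specialization, and the vanishing of each $\theta_{ij}^k$ is immediate from its explicit form once $\zeta_u=\zeta_v=0$. The one point requiring care is conceptual rather than computational, namely reconciling the factor $c^4$ appearing here with the normalization of \say{conformally invariant} fixed in Definition \ref{def1}; I would make explicit that $\beta\cdot{\bf N}$ carries the weight $\zeta^4$ dictated by (\ref{x8}) itself, so that the homothetic scaling $c^4$ is the correct invariance factor, and I would flag that the binormal and invariant-curvature hypotheses are genuinely sufficient but not necessary for this vanishing.
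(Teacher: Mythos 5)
Your proposal is correct and follows essentially the same route as the paper: the paper likewise obtains (\ref{x3.19}) by setting $\zeta(u,v)=c$ and specializing (\ref{x8}), (\ref{q9}) and (\ref{q16}), where the vanishing of every $\theta_{ij}^k$ (and hence of $h$) is immediate from $\zeta_u=\zeta_v=0$, and then reads the invariance conditions $\nu=0$ or $\tilde{\kappa}_n=c^4\kappa_n$ directly off the surviving term. Your additional remarks on the $c^4$ weight and on sufficiency versus necessity are reasonable glosses but do not change the argument.
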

\begin{proof}
Letting $\zeta(u,v)=c$, from (\ref{x8}), (\ref{q9}) and (\ref{q16}), the claim in (\ref{x3.19}) is straightforward. 

Again from (\ref{x3.19}), we see that $\beta$ is conformally invariant if and only if $\nu=0$, i.e., $\beta(s)=\eta(s)b(s)$ or $\tilde{\kappa}_n=c^4 \kappa_n$.

\end{proof}
\begin{corollary}\cite{9}
Let ${\mathcal{S}}$ and $\tilde{\mathcal{S}}$ be two isometric smooth surfaces and $\beta(s)$ be a normal curve on ${\mathcal{S}}$. Then for the normal component of $\beta(s)$ along the surface normal, we have 
\begin{equation*}
\tilde{\beta}\cdot {\tilde{\bf N}}- \left(\beta \cdot {\bf N}\right)=\frac{\nu}{\kappa}(\tilde{\kappa}_n- \kappa_n).
\end{equation*} 
Moreover under such an isometry the normal component along the surface normal is invariant if the position vector of $\beta$ is in the binormal direction or the normal curvature is invariant.
\end{corollary}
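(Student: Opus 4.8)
The plan is to obtain this statement as a direct specialization of the preceding Theorem, by substituting the isometry condition $\zeta \equiv 1$ into the master identity (\ref{x8}). First I would record that for an isometry the dilation factor is identically $1$, so in particular $\zeta^4 = 1$; this immediately converts the left-hand side $\tilde{\beta}\cdot\tilde{\bf N} - \zeta^4\,\beta\cdot{\bf N}$ into $\tilde{\beta}\cdot\tilde{\bf N} - \beta\cdot{\bf N}$ and the first term on the right into $\frac{\nu}{\kappa}(\tilde{\kappa}_n - \kappa_n)$, exactly as claimed.

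The only substantive step is to show that the residual term $\frac{\eta}{\kappa}\,h(\mathcal{E},\mathcal{G},\mathcal{F},\zeta)$ vanishes. For this I would appeal to the explicit formula (\ref{q9}), which expresses $h$ as a fixed linear combination, with coefficients built from $u',v'$ and $W^2$, of the six quantities $\theta_{ij}^k$ displayed in (\ref{q16}). Inspecting (\ref{q16}), every $\theta_{ij}^k$ has the form (a rational expression in $\mathcal{E},\mathcal{F},\mathcal{G}$) times either $\zeta_u$ or $\zeta_v$, all divided by $\zeta W^2$. Since $\zeta \equiv 1$ is constant we have $\zeta_u = \zeta_v = 0$, whence each $\theta_{ij}^k = 0$ and therefore $h = 0$. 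This delivers the displayed identity.

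For the \emph{moreover} clause I would argue as follows. Saying that the normal component is invariant means $\tilde{\beta}\cdot\tilde{\bf N} = \beta\cdot{\bf N}$, i.e. the right-hand side $\frac{\nu}{\kappa}(\tilde{\kappa}_n - \kappa_n)$ is zero. As $\kappa$ is the (nonvanishing) curvature attached to the Frenet frame, a product of this shape vanishes precisely when $\nu = 0$ or $\tilde{\kappa}_n = \kappa_n$. By the defining representation (\ref{1}), the condition $\nu = 0$ is equivalent to $\beta = \eta\,\vec{\mathfrak{b}}$, i.e. the position vector lies in the binormal direction; the remaining alternative is exactly invariance of the normal curvature.

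I do not anticipate any genuine obstacle, since all of the analytic content has already been carried out in the Theorem and this is a pure specialization. If anything, the one point deserving care is the bookkeeping in the second paragraph: one must confirm that the coefficients multiplying the $\theta_{ij}^k$ in (\ref{q9}) harbour no stray factor of $\zeta$ that could survive after setting $\zeta$ constant. A glance at (\ref{q16}) settles this, however, because every summand there is manifestly proportional to a first partial derivative of $\zeta$, so constancy of $\zeta$ annihilates all of them uniformly.
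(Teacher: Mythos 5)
Your proposal is correct and follows exactly the paper's intended route: the corollary is obtained by specializing the theorem's identity (\ref{x8}) to $\zeta\equiv 1$, observing via (\ref{q16}) that every $\theta_{ij}^k$ is proportional to $\zeta_u$ or $\zeta_v$ and hence vanishes (the paper makes this same observation in its remark on the invariance of the Christoffel symbols under isometry), so that $h=0$ and the stated identity and its consequences follow. No gaps; this matches the paper's argument.
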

\begin{remark}
Let $ \mathcal{J}: {\mathcal{S}} \rightarrow \tilde{\mathcal{S}}$ be an isometry, then the dilation factor of conformality is $\zeta=1$. From (\ref{q13}) and (\ref{q16}), it is straightforward to check $\tilde{\mathtt{\Gamma}}_{ij}^k=\mathtt{\Gamma}_{ij}^k,(i,j,k=1,2),$ i.e., Christoffel symbols are invariant under isometry. 
\end{remark}
\begin{theorem}
Let ${\mathcal{S}}$ and $\tilde{\mathcal{S}}$ be two conformal smooth surfaces and $\beta(s)$ be a normal curve on ${\mathcal{S}}$. Then for the tangential component, we have
\begin{equation}\label{ik1}
\tilde{\beta}\cdot{ \tilde{\bf T}}-\zeta^2\left(\beta\cdot {\bf T}\right) =(ag_1+bg_2)+\frac{\eta}{\kappa}\left(\tilde{\kappa}_n-\zeta^2 \kappa_n\right)(a v^\prime+b u^\prime),
\end{equation}
where $g_1$ and $g_2$ are given by (\ref{y21a}) and (\ref{y21b}), respectively.
\end{theorem}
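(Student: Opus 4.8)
The plan mirrors the proof of the preceding normal-component theorem, with the surface normal ${\bf N}$ replaced by the tangential Darboux field ${\bf T}={\bf N}\times\beta'$. First I would resolve ${\bf T}$ in the coordinate basis $\{\Psi_u,\Psi_v\}$. Writing ${\bf N}=(\Psi_u\times\Psi_v)/W$ with $W=\sqrt{\mathcal{E}\mathcal{G}-\mathcal{F}^2}$ and $\beta'=u'\Psi_u+v'\Psi_v$, the double cross-product identity $(\Psi_u\times\Psi_v)\times\Psi_u=\mathcal{E}\Psi_v-\mathcal{F}\Psi_u$ together with its $\Psi_v$ analogue gives ${\bf T}=a\Psi_u+b\Psi_v$, where $a$ and $b$ are explicit combinations of $\mathcal{E},\mathcal{F},\mathcal{G},u',v'$ and $W$. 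Because the conformal relations (\ref{z2.8}) force $\tilde W=\zeta^2 W$, the scaling of $a,b$ under $\mathcal J$ is immediate, and this is what reflects the power $\zeta^2$ on the left of (\ref{ik1}) (as opposed to the $\zeta^4$ of the normal component, which carried the factor $W^2$).

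Next I would compute $\beta\cdot{\bf T}=a(\beta\cdot\Psi_u)+b(\beta\cdot\Psi_v)$ from the normal-curve expansion (\ref{2.1}). Pairing the $\frac{\nu}{\kappa}$-block with $\Psi_u$ and $\Psi_v$ and simplifying the inner products $\Psi_{uu}\cdot\Psi_u,\ \Psi_{uv}\cdot\Psi_v$ and the rest through (\ref{1.16})--(\ref{1.17}) produces the first-fundamental-form quantities, which I would organize via the Christoffel symbols (\ref{q12}); these are the pieces that become $ag_1+bg_2$. Pairing the $\frac{\eta}{\kappa}$-block is where the second fundamental form enters: every term $\Psi_u\times(\cdot)$ annihilates against $\Psi_u$, while the scalar triple products $(\Psi_v\times\Psi_{uu})\cdot\Psi_u=W\mathcal L$, $(\Psi_v\times\Psi_{uv})\cdot\Psi_u=W\mathcal M$, $(\Psi_v\times\Psi_{vv})\cdot\Psi_u=W\mathcal N$ (with the dual signs for the pairing against $\Psi_v$) reassemble $\kappa_n$ through (\ref{se1}). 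Thus $\beta\cdot{\bf T}$ separates into a Christoffel part and a $\frac{\eta}{\kappa}\kappa_n$ part carrying the coefficient combination $av'+bu'$ recorded in (\ref{ik1}).

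I would then run the identical computation on $\tilde{\mathcal S}$, writing $\tilde\Psi_u=\zeta\mathcal J_*\Psi_u$ and $\tilde\Psi_v=\zeta\mathcal J_*\Psi_v$, invoking the matched ratios $\tilde\nu/\tilde\kappa=\nu/\kappa$ and $\tilde\eta/\tilde\kappa=\eta/\kappa$ established in the first theorem of this section, and substituting the derivative relations (\ref{12a}) together with the transformation law (\ref{q13})--(\ref{q16}) of the Christoffel symbols. Forming $\tilde\beta\cdot\tilde{\bf T}-\zeta^2(\beta\cdot{\bf T})$, the first-fundamental-form contributions differ by precisely $ag_1+bg_2$, which I take as the definitions (\ref{y21a})--(\ref{y21b}) of $g_1,g_2$, while the two $\kappa_n$-contributions, linked by $\tilde W=\zeta^2 W$, collapse into $\frac{\eta}{\kappa}(\tilde\kappa_n-\zeta^2\kappa_n)(av'+bu')$, yielding (\ref{ik1}).

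The main obstacle is the bookkeeping in this last comparison. Each ingredient, namely the coefficients $a,b$, the inner products, the factor $W$, and the Christoffel symbols, scales by its own power of $\zeta$ and picks up $\zeta_u,\zeta_v$ corrections through (\ref{12a}) and (\ref{q16}); the delicate point is to verify that the conformal corrections to the Christoffel part are exactly those packaged into $g_1,g_2$, so that the second-fundamental-form part isolates as the stated $\kappa_n$-deviation with no residual $\zeta$-derivative terms. No new geometric idea is needed beyond the normal-component theorem; the entire content lies in keeping the $\zeta$-weights straight.
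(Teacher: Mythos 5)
Your plan is the paper's plan: expand $\beta\cdot\Psi_u$ and $\beta\cdot\Psi_v$ from the normal-curve representation (\ref{2.1}), simplify with (\ref{1.16})--(\ref{1.17}) and reassemble the second-fundamental-form terms into $\kappa_n$ via (\ref{se1}), repeat on $\tilde{\mathcal{S}}$ using (\ref{z2.8}) and (\ref{12a}), subtract to get the paper's (\ref{y20}) and (\ref{y21}), and combine them linearly through ${\bf T}=a\Psi_u+b\Psi_v$. Your one genuine addition is making ${\bf T}={\bf N}\times\beta'$ explicit and checking that $a,b$ are themselves conformally invariant (numerator and denominator both scale by $\zeta^2$ since $\tilde{W}=\zeta^2 W$), which is precisely what licenses writing $\tilde{\bf T}=a\tilde{\Psi}_u+b\tilde{\Psi}_v$ with the \emph{same} coefficients --- a step the paper takes silently. (A minor stylistic difference: the paper does not route the first-fundamental-form block through the Christoffel symbols in this proof; $g_1,g_2$ in (\ref{y21a})--(\ref{y21b}) are read off directly from the correction terms in (\ref{12a}).)

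There is, however, one real inconsistency you need to resolve: the careful identities you state do not lead to (\ref{ik1}) as written. With the unit normal, $(\Psi_v\times\Psi_{uu})\cdot\Psi_u=W\mathcal{L}$ as you say, so the $\eta$-block of $\beta\cdot\Psi_u$ is $W v'\kappa_n$, and since $\tilde{W}=\zeta^2 W$ the subtraction produces $\frac{\eta}{\kappa}\,\zeta^2 W\left(\tilde{\kappa}_n-\kappa_n\right)v'$ rather than $\frac{\eta}{\kappa}\left(\tilde{\kappa}_n-\zeta^2\kappa_n\right)v'$; likewise the ``dual signs'' you correctly note for the pairing against $\Psi_v$ (namely $(\Psi_u\times\Psi_{uu})\cdot\Psi_v=-W\mathcal{L}$, etc.) make that contribution $-W u'\kappa_n$, so the coefficient combination comes out as $av'-bu'$, not $av'+bu'$. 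The paper lands on (\ref{ik1}) only because its computation identifies $(\Psi_v\times\Psi_{uu})\cdot\Psi_u$ with $\mathcal{L}$ (no factor of $W$, i.e.\ an unnormalized normal) and then asserts the $\Psi_v$-pairing ``on the similar lines'' with a plus sign. So you must either adopt those conventions to reproduce (\ref{ik1}) verbatim, or carry your own (more careful) identities through and accept a corrected variant of the statement; as written, your final collapse step contradicts the triple-product identities you set up two sentences earlier.
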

\begin{proof}
From (\ref{2.1}), we see that
\begin{eqnarray*}
\nonumber  \beta \cdot  \Psi_u &=&\frac{\nu}{\kappa}\Big[u^{\prime \prime} \mathcal{E}  +v^{\prime \prime} \mathcal{F}+{u^\prime}^2 \Psi_{uu} \cdot  \Psi_u + 2u^\prime v^\prime  \Psi_{uv} \cdot  \Psi_u  + {v^\prime}^2 \Psi_{vv} \cdot  \Psi_u  \Big]\\
\label{a1.13}&&+\frac{\eta}{\kappa}\Big[{u^\prime}^2 v^\prime \mathcal{L}  +2{v^\prime}^2 u^\prime  \mathcal{M}+{v^\prime}^3  \mathcal{N} \Big]
\end{eqnarray*}
or by using (\ref{1.16}), (\ref{1.17}) and (\ref{se1}), we can write the above equation as
\begin{eqnarray*}
\nonumber  \beta \cdot  \Psi_u &=&\frac{\nu}{\kappa}\Big[u^{\prime \prime} \mathcal{E}  +v^{\prime \prime} \mathcal{F}+{u^\prime}^2 \frac{\mathcal{E}_u}{2} + 2u^\prime v^\prime  \frac{\mathcal{E}_v}{2}  + {v^\prime}^2 \left(\mathcal{F}_v-\frac{\mathcal{G}_u}{2}\right)  \Big]+\frac{\eta}{\kappa}v^\prime \kappa_n.
\end{eqnarray*}
Now if $\tilde{\beta}$ be the conformal image of $\beta$ on $\tilde{\mathcal{S}}$, we have
\begin{eqnarray*}
\nonumber  \tilde{\beta} \cdot  \tilde{\Psi}_u &=&\frac{\tilde{\nu}}{\tilde{\kappa}}\Big[u^{\prime \prime} \tilde{\mathcal{E}}  +v^{\prime \prime} \tilde{\mathcal{F}}+{u^\prime}^2 \frac{\tilde{\mathcal{E}}_u}{2} + 2u^\prime v^\prime  \frac{\tilde{\mathcal{E}}_v}{2}  + {v^\prime}^2 \left(\tilde{\mathcal{F}}_v-\frac{\tilde{\mathcal{G}}_u}{2}\right)  \Big]+\frac{\tilde{\eta}}{\tilde{\kappa}}v^\prime \tilde{\kappa}_n.
\end{eqnarray*}
In view of (\ref{z2.8}) and (\ref{12a}), the above equation turns out to be
\begin{eqnarray*}
\tilde{\beta} \cdot \tilde{\Psi}_u&=&\frac{\nu}{\kappa}\Big[u^{\prime \prime}\zeta^2 \mathcal{E} +v^{\prime\prime}\zeta^2 \mathcal{F}+{u^\prime}^2\frac{(2\zeta \zeta_u \mathcal{E} + \zeta^2 \mathcal{E}_u)}{2} +u^\prime v^\prime(2\zeta \zeta_v \mathcal{E} + \zeta^2 \mathcal{E}_v)\\
&&+{v^\prime}^2\left(2\zeta \zeta_v \mathcal{F} + \zeta^2 \mathcal{F}_v-\frac{2\zeta \zeta_u \mathcal{G} + \zeta^2 \mathcal{G}_u}{2}\right) \Big]+\frac{\eta}{\kappa}v^\prime \tilde{\kappa}_n
\end{eqnarray*}
or 
\begin{equation}\label{y20}
\tilde{\beta}\cdot \tilde{\Psi}_u-\zeta^2 (\beta \cdot \Psi_u)=g_1(\mathcal{E},\mathcal{F},\mathcal{G},\zeta)+\frac{\eta}{\kappa}v^\prime \left(\tilde{\kappa}_n-\zeta^2 \kappa_n\right),
\end{equation}
where
\begin{equation}\label{y21a}
g_1(\mathcal{E},\mathcal{F},\mathcal{G},\zeta)=\frac{\nu}{\kappa}\Big[{u^\prime}^2\zeta \zeta_u \mathcal{E} +2u^\prime v^\prime \zeta \zeta_v \mathcal{E} +{v^\prime}^2\left(2\zeta \zeta_v \mathcal{F}-\zeta \zeta_u \mathcal{G}\right)\Big].
\end{equation}
On the similar lines, it is easy to find
\begin{equation}\label{y21}
\tilde{\beta}\cdot \tilde{\Psi}_v-\zeta^2 (\beta \cdot \Psi_v)=g_2(\mathcal{E},\mathcal{F},\mathcal{G},\zeta)+\frac{\eta}{\kappa}u^\prime \left(\tilde{\kappa}_n-\zeta^2 \kappa_n\right),
\end{equation}
where
\begin{equation}\label{y21b}
g_2(\mathcal{E},\mathcal{F},\mathcal{G},\zeta)=\frac{\nu}{\kappa}\Big[{u^\prime}^2\left(2 \zeta \zeta_u \mathcal{F}- \zeta \zeta_v \mathcal{E} \right)+2u^\prime v^\prime \zeta \zeta_u \mathcal{G}+{v^\prime}^2 \zeta \zeta_v \mathcal{G}\Big].
\end{equation}
Now with the help of $(\ref{y20})$ and $(\ref{y21})$, we get
\begin{eqnarray*}
\tilde{\beta}\cdot{\tilde{\bf T}}-\zeta^2\left(\beta\cdot {\bf T}\right) &=&\tilde{\beta}\cdot(a\tilde{\Psi}_u+b\tilde{\Psi}_v)-\zeta^2\beta\cdot(a\Psi
_u+b\Psi
_v)\\
&=& a(\tilde{\beta}\cdot \tilde{\Psi}_u -\zeta^2\beta\cdot \Psi_u)+b(\tilde{\beta}\cdot \tilde{\Psi}_v -\zeta^2\beta\cdot \Psi_v)\\
&=& a\left\{(g_1+\frac{\eta}{\kappa}v^\prime \left(\tilde{\kappa}_n-\zeta^2 \kappa_n\right)\right\}+b\left\{ g_2+\frac{\eta}{\kappa}u^\prime \left(\tilde{\kappa}_n-\zeta^2 \kappa_n\right)  \right\}\\
&=&(ag_1+bg_2)+\frac{\eta}{\kappa}\left(\tilde{\kappa}_n-\zeta^2 \kappa_n\right)(a v^\prime+b u^\prime).
\end{eqnarray*}
\end{proof}
\begin{corollary}Let $ \mathcal{J}:{\mathcal{S}}\rightarrow \tilde{\mathcal{S}}$ be a conformal homothetic map and $\beta$ be a normal curve on $\mathcal{S}$. The the tangential component of $\beta$ is homothetic invariant if and only if the position vector of $\beta$ is in the normal direction or the normal curvature is homothetic invariant. 
\end{corollary}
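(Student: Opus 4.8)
The plan is to read this off directly from the preceding theorem by specializing the dilation function. First I would recall the master identity \eqref{ik1} together with the explicit expressions \eqref{y21a} and \eqref{y21b} for $g_1$ and $g_2$. For a homothetic map the dilation function is constant, $\zeta(u,v)\equiv c$ with $c\neq\{0,1\}$, so that $\zeta_u=\zeta_v=0$. This is the only input specific to the homothetic case; everything else is inherited from the general conformal statement.

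The key observation is that every summand in \eqref{y21a} and \eqref{y21b} carries a factor $\zeta\zeta_u$ or $\zeta\zeta_v$; hence $g_1=g_2=0$ identically once $\zeta$ is constant. Substituting this into \eqref{ik1} collapses the right-hand side to a single term, leaving
\begin{equation*}
\tilde{\beta}\cdot{\tilde{\bf T}}-c^2\left(\beta\cdot {\bf T}\right)=\frac{\eta}{\kappa}\left(\tilde{\kappa}_n-c^2 \kappa_n\right)(a v^\prime+b u^\prime).
\end{equation*}
By analogy with the normal-component corollary, where the factor $c^4$ appears, here the tangential component is declared homothetic invariant precisely when the left-hand side vanishes, i.e. when $\tilde{\beta}\cdot\tilde{\bf T}=c^2(\beta\cdot{\bf T})$. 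The remaining work is therefore just to analyse when the right-hand side is zero.

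Since $\kappa\neq0$ along a Frenet curve and the coordinate factor $a v^\prime+b u^\prime$ is (generically) nonzero, the product on the right vanishes if and only if $\eta=0$ or $\tilde{\kappa}_n=c^2\kappa_n$. The first case, $\eta=0$, reduces the normal-curve representation \eqref{1} to $\beta=\nu\vec{\mathfrak{n}}$, i.e. the position vector lies in the (principal) normal direction; the second case says exactly that the normal curvature is homothetic invariant. Both directions of the equivalence then follow immediately from this factorization: if either listed condition holds the right-hand side is zero, and conversely homothetic invariance forces the product to vanish, whence one of the two conditions must occur.

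I expect no serious obstacle here, since the claim is a direct specialization of the tangential-component theorem. The only point requiring a little care is the nondegeneracy assumption $a v^\prime+b u^\prime\neq0$, which is needed to turn the ``only if'' direction into a genuine dichotomy rather than a one-sided implication; I would state this explicitly (or absorb it into the standing hypothesis that $\beta$ is a regular normal curve with nonvanishing curvature whose tangent is not $\Psi$-degenerate), so that the vanishing of the product truly forces one of the two listed alternatives.
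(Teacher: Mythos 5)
Your proposal is correct and follows essentially the same route as the paper: the paper's proof simply sets $\zeta\equiv c$ in \eqref{ik1}, which kills $g_1$ and $g_2$ (every term in \eqref{y21a} and \eqref{y21b} carries $\zeta_u$ or $\zeta_v$), and then reads the equivalence off the resulting identity $\tilde{\beta}\cdot\tilde{\bf T}-c^2(\beta\cdot{\bf T})=\frac{\eta}{\kappa}(\tilde{\kappa}_n-c^2\kappa_n)(av'+bu')$, declaring the conclusions ``straightforward.'' If anything, you are more careful than the paper, which never flags the nondegeneracy requirement $av'+bu'\neq 0$ needed for the ``only if'' direction.
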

\begin{proof}
For a homothetic conformal map, from (\ref{ik1}), we have
\begin{equation*}
\tilde{\beta}\cdot{ \tilde{\bf T}}-c^2\left(\beta\cdot {\bf T}\right) =\frac{\eta}{\kappa}\left(\tilde{\kappa}_n-c^2 \kappa_n\right)(a v^\prime+b u^\prime).
\end{equation*}
The conclusions are straightforward from the above expression.
\end{proof}
\begin{corollary}\cite{9}Let $ \mathcal{J}:{\mathcal{S}}\rightarrow \tilde{\mathcal{S}}$ be an isometry and $\beta$ be a normal curve on $\mathcal{S}$. The  for the tangential component of $\beta$, we have 
\begin{equation*}
\tilde{\beta}\cdot{\tilde{\bf T}}-\left(\beta\cdot {\bf T}\right) =\frac{\eta}{\kappa}\left(\tilde{\kappa}_n- \kappa_n\right)(a v^\prime+b u^\prime)
\end{equation*}
and is invariant if and only if the position vector of $\beta$ is in the normal direction or the normal curvature is invariant. 
\end{corollary}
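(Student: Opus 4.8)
The plan is to obtain this corollary as the $\zeta \equiv 1$ specialization of the conformal tangential-component theorem, equation (\ref{ik1}). As recorded in the introduction, an isometry is precisely a conformal map whose dilation factor satisfies $\zeta(u,v)\equiv 1$; hence every ingredient of the conformal identity is available here with $\zeta = 1$, $\zeta^2 = 1$, and --- crucially --- $\zeta_u = \zeta_v = 0$, since $\zeta$ is constant. This reduces the task to inspecting how the master formula degenerates, so no fresh computation is really needed.

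First I would substitute $\zeta \equiv 1$ into the explicit expressions (\ref{y21a}) and (\ref{y21b}) for $g_1$ and $g_2$. Reading off those formulas, every summand carries a factor of either $\zeta_u$ or $\zeta_v$, so both $g_1$ and $g_2$ vanish identically. Feeding $\zeta^2 = 1$ together with $g_1 = g_2 = 0$ into (\ref{ik1}) collapses the general identity to
\begin{equation*}
\tilde{\beta}\cdot{\tilde{\bf T}}-\left(\beta\cdot {\bf T}\right) =\frac{\eta}{\kappa}\left(\tilde{\kappa}_n- \kappa_n\right)(a v^\prime+b u^\prime),
\end{equation*}
which is the asserted formula. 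For the invariance statement I would then read off when the right-hand side vanishes: the factor $\tfrac{1}{\kappa}$ is nonzero and, for a generic tangent direction, $av^\prime + bu^\prime \neq 0$, so the correction term is zero exactly when $\eta = 0$ or $\tilde{\kappa}_n = \kappa_n$. By the defining expansion (\ref{1}), $\beta = \nu\vec{\mathfrak{n}} + \eta\vec{\mathfrak{b}}$, the condition $\eta = 0$ is precisely that the position vector of $\beta$ lie along the normal direction $\vec{\mathfrak{n}}$, while $\tilde{\kappa}_n = \kappa_n$ is the invariance of the normal curvature; this gives both directions of the equivalence.

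There is essentially no computational obstacle here, as the corollary is a clean limiting case of the theorem already proved. The only point I would flag for precision is the \emph{if and only if}: strictly the right-hand side also vanishes in the degenerate situation $av^\prime + bu^\prime = 0$, so the equivalence should be understood for a generic curve direction, where the two stated alternatives ($\eta = 0$ or $\tilde{\kappa}_n = \kappa_n$) exhaust the possibilities. A short remark to that effect keeps the statement accurate without altering its substance.
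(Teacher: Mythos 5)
Your proposal is correct and matches the paper's approach exactly: the paper treats this corollary as the $\zeta\equiv 1$ specialization of equation (\ref{ik1}), just as its homothetic corollary is the $\zeta\equiv c$ specialization, with $g_1=g_2=0$ because every term of (\ref{y21a}) and (\ref{y21b}) carries a factor $\zeta_u$ or $\zeta_v$. Your added caveat that the ``if and only if'' should be read for generic directions (since $av^\prime+bu^\prime$ may vanish) is a legitimate precision that the paper glosses over, but it does not change the substance.
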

\begin{proposition}
Let $ \mathcal{J}$ be a conformal map between two smooth surfaces ${\mathcal{S}}$ and $\tilde{\mathcal{S}}$ and let $\beta(s)$ be a parameterized curve on ${\mathcal{S}}$ such that $\tilde{\beta}(s)= \mathcal{J}\circ \beta(s)$ is conformal parameterized image of $\beta$ on $\tilde{\mathcal{S}}$. Then for the geodesic curvature of $\beta$,  we have
\begin{equation}\label{i20}
 \tilde{\kappa}_g-\zeta^2 \kappa_g= f(\mathcal{E},\mathcal{F},\mathcal{G},\zeta).
 \end{equation}
\end{proposition}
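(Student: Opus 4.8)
The plan is to reduce everything to the classical intrinsic (Liouville--Beltrami) formula for the geodesic curvature, which expresses $\kappa_g$ entirely through the first fundamental form. For the arc-length parameterized curve $\beta(s)=\Psi(u(s),v(s))$ one has
\begin{equation*}
\kappa_g = W\Big[(u'v''-u''v') + {u'}^3\mathtt{\Gamma}_{11}^2 - {v'}^3\mathtt{\Gamma}_{22}^1 + 2{u'}^2v'\mathtt{\Gamma}_{12}^2 + u'{v'}^2\mathtt{\Gamma}_{22}^2 - {u'}^2v'\mathtt{\Gamma}_{11}^1 + 2u'{v'}^2\mathtt{\Gamma}_{12}^1\Big],
\end{equation*}
where $W=\sqrt{\mathcal{E}\mathcal{G}-\mathcal{F}^2}$ and the bracket is precisely the intrinsic quantity (call it $B$) that already appears, multiplied by $W^2$, in the normal-component computation (\ref{x13}). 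First I would record this formula and stress that the entire $s$-dependence lives in the universal polynomial $B$ in $u',v',u'',v''$, while the geometry enters only through $W$ and the six Christoffel symbols $\mathtt{\Gamma}_{ij}^k$ of (\ref{q12}).

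Next I would write the identical formula on $\tilde{\mathcal{S}}$, namely $\tilde{\kappa}_g = \tilde{W}\,\tilde{B}$, where $\tilde{B}$ is the same polynomial bracket with each $\mathtt{\Gamma}_{ij}^k$ replaced by $\tilde{\mathtt{\Gamma}}_{ij}^k$. The two conformal transformation laws then do all the work. From (\ref{z2.8}) we obtain $\tilde{W}=\sqrt{\tilde{\mathcal{E}}\tilde{\mathcal{G}}-\tilde{\mathcal{F}}^2}=\zeta^2 W$, and from (\ref{q13}) we have $\tilde{\mathtt{\Gamma}}_{ij}^k=\mathtt{\Gamma}_{ij}^k+\theta_{ij}^k$ with the $\theta_{ij}^k$ given explicitly in (\ref{q16}). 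Substituting the second relation yields $\tilde{B}=B+\Theta$, where $\Theta$ is the corresponding bracket built from the $\theta_{ij}^k$; this is exactly the bracket that multiplies $W^2$ in the definition (\ref{q9}) of $h$.

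The decisive step is the subtraction. Because $\tilde{W}=\zeta^2 W$, the combination $\tilde{\kappa}_g - \zeta^2\kappa_g$ is precisely the one in which the intrinsic bracket $B$ cancels:
\begin{equation*}
\tilde{\kappa}_g - \zeta^2\kappa_g = \zeta^2 W(B+\Theta) - \zeta^2 W B = \zeta^2 W\,\Theta.
\end{equation*}
Since each $\theta_{ij}^k$ in (\ref{q16}) is an explicit expression in $\mathcal{E},\mathcal{F},\mathcal{G}$, their first derivatives, and $\zeta,\zeta_u,\zeta_v$, the right-hand side is a function $f(\mathcal{E},\mathcal{F},\mathcal{G},\zeta)$, which is the asserted identity (\ref{i20}); explicitly $f=\zeta^2 W\,\Theta$. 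In particular $f$ vanishes whenever $\zeta$ is constant, for then all $\theta_{ij}^k=0$, recovering the homothetic and isometric behaviour of $\kappa_g$ and matching the pattern of the earlier corollaries.

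The main obstacle is the parameterization bookkeeping. The parameter $s$ is arc length for $\beta$ on $\mathcal{S}$ but \emph{not} for $\tilde{\beta}$ on $\tilde{\mathcal{S}}$, since $d\tilde{s}=\zeta\,ds$; the Liouville formula for the image curve carries the speed factor $(\tilde{\mathcal{E}}{u'}^2+2\tilde{\mathcal{F}}u'v'+\tilde{\mathcal{G}}{v'}^2)^{-3/2}=\zeta^{-3}$, and this normalization must be tracked consistently to fix the exact power of $\zeta$ attached to $\kappa_g$ and to make the cancellation of $B$ exact. Once this normalization is pinned down in the same convention used for the normal-component computation (\ref{x13})--(\ref{q9}), the remaining verification is routine: the intrinsic $B$-bracket drops out automatically, a cancellation that is guaranteed structurally by the two transformation laws $\tilde{W}=\zeta^2 W$ and $\tilde{\mathtt{\Gamma}}_{ij}^k=\mathtt{\Gamma}_{ij}^k+\theta_{ij}^k$, and only the purely intrinsic-plus-dilation remainder $f$ survives.
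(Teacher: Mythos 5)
Your argument is essentially the paper's own proof: write the Liouville--Beltrami formula (\ref{3.18}) on both surfaces, apply the conformal transformation laws $\tilde{W}=\zeta^{2}W$ and $\tilde{\mathtt{\Gamma}}_{ij}^{k}=\mathtt{\Gamma}_{ij}^{k}+\theta_{ij}^{k}$ of (\ref{q13})--(\ref{q16}), and subtract so that the curve-dependent bracket cancels, leaving only the $\theta$-terms. The sole deviation is the residue: you obtain $f=\zeta^{2}W\,\Theta$ whereas the paper states $f=\Theta\,W^{2}$, because the paper writes the factor $W^{2}$ where the Beltrami formula on $\tilde{\mathcal{S}}$ actually produces $\tilde{W}=\zeta^{2}W$ (as written, the paper's two displays are compatible only if $W=\zeta^{2}$); your bookkeeping, including the caveat that $s$ is not arc length for $\tilde{\beta}$, is the internally consistent version of the same computation.
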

\begin{proof}
Let $\beta$ be a parameterized curve on a smooth surface ${\mathcal{S}}$, then the geodesic curvature is given by Beltrami formula as:
\begin{equation}\label{3.18}
\kappa_g=\Big[\mathtt{\Gamma}_{11}^2{u^\prime}^3+(2\mathtt{\Gamma}_{12}^2-\mathtt{\Gamma}_{11}^1){u^\prime}^2 v^\prime +(\mathtt{\Gamma}_{22}^2-2\mathtt{\Gamma}_{12}^1)u^\prime {v^\prime}^2-\mathtt{\Gamma}_{22}^1{v^\prime}^3+u^\prime v^{\prime\prime}-u^{\prime \prime}v^\prime\Big] \sqrt{\mathcal{E}\mathcal{G}-\mathcal{F}^2}.
\end{equation}
Now, let $\tilde{\beta}= \mathcal{J}\circ \beta$ be the conformal image of $\beta$ on $\tilde{\mathcal{S}}$, then with the help of (\ref{q13}), we have
 \begin{eqnarray*}
 \tilde{\kappa}_g&=& \Big[\mathtt{\Gamma}_{11}^2{u^\prime}^3+(2\mathtt{\Gamma}_{12}^2-\mathtt{\Gamma}_{11}^1){u^\prime}^2 v^\prime +(\mathtt{\Gamma}_{22}^2-2\mathtt{\Gamma}_{12}^1)u^\prime {v^\prime}^2-\mathtt{\Gamma}_{22}^1{v^\prime}^3+u^\prime v^{\prime\prime}-u^{\prime \prime}v^\prime\Big] W^2\\
 &&+\Big[\theta_{11}^2{u^\prime}^3+(2\theta_{12}^2-\theta_{11}^1){u^\prime}^2 v^\prime +(\theta_{22}^2-2\theta_{12}^1)u^\prime {v^\prime}^2-\theta_{22}^1{v^\prime}^3\Big]W^2
 \end{eqnarray*}
 or 
 \begin{equation*}
 \tilde{\kappa}_g-\zeta^2 \kappa_g= f(\mathcal{E},\mathcal{F},\mathcal{G},\zeta),
 \end{equation*}
 where $f(\mathcal{E},\mathcal{G},\mathcal{F},\zeta)= \left\{\theta_{11}^2{u^\prime}^3+(2\theta_{12}^2-\theta_{11}^1){u^\prime}^2 v^\prime +(\theta_{22}^2-2\theta_{12}^1)u^\prime {v^\prime}^2-\theta_{22}^1{v^\prime}^3\right\}W^2.$
 
\noindent This proves the claim.
\end{proof}
{\bf Note}: It is to be noted that, in particular, if $\beta$ is a normal curve and $ \mathcal{J}$ is isometry(or homothetic), from (\ref{i20}) we see that $\kappa_g$ is invariant(or homothetic invariant). 

{\bf Acknowledgment:} I am very thankful to Prof. Absos A. Shaikh for his valuable suggestions.


\begin{thebibliography}{3}

%%%%%%%%%%%%%%%%%%%%%%%%%%%%%%\\

%%%%%%%%%%%%%%%%%%%%%%%%%%%%%

%%%%%%%%%%%%%%%%%%%%%%%%%%%%%
\bibitem{1}A. Bobenko, C. Gunn, DVD-Video PAL, 15 minutes, \url{https://www.springer.com/us/book/9783319734736}, Springer VideoMATH, March 20, 2018, or \url{https://www.youtube.com/watch?v=7TFDMlLEOBw}, 19 Nov. 2018.
%%%%%%%%%%%%%%%%%%%%%%%%%%%%%
 \bibitem{2}
 B.-Y. Chen,
  \textit{What does the position vector of a space curve always lie in its rectifying plane}?, 
 Amer. Math. Monthly, \textbf{110} (2003), 147-152.
 %%%%%%%%%%%%%%%%%%%%%%%%%%%%%
% \bibitem{3}
%  B.-Y. Chen and F. Dillen,
 %  \textit{Rectfying curve as centrode and extremal curve.}, 
 % Bull. Inst. Math. Acad. Sinica, \textbf{33}, no. 2, (2005), 77-90.
  %%%%%%%%%%%%%%%%%%%%%%%%%%%%%
  \bibitem{4}
   S. Deshmukh, B.-Y. Chen and S. H. Alshammari,
    \textit{On a rectifying curves in euclidean 3-space}, 
   Turk. J. Math., \textbf{42} (2018), 609-620.  
   
 \bibitem{5}
M. P. do Carmo,
\textit{Differential geometry of curves and surfaces}, 
Prentice-Hall, Inc, New Jersey, 1976.
  
 \bibitem{6}
   M. He, D. B. Goldgof and C. Kambhamettu,
    \textit{Variation of Gaussian curvature under conformal mapping and its application}, 
   Comuputers Math. Applic., \textbf{26} (1993), 63-74. 

\bibitem{6i}
 K. Ilarslan and E. Ne\v{s}ovi\'{c},
    \textit{Timelike and null normal curves in Minkowski space $\mathbb{E}_1^3$},  
   Indian J. Pure Appl. Math., \textbf{35} (2004), 881-888. 
 
\bibitem{8}
   A. A Shaikh and P. R. Ghosh
    \textit{Rectifying curves on a smooth surface immersed in the Euclidean space}, 
  to appear in Indian J. Pure Appl. Math., (2018).
  
   \bibitem{8a}
     A. A Shaikh and P. R. Ghosh
       \textit{Rectifying and osculating curves on a smooth surface}, 
     to appear in Indian J. Pure Appl. Math., (2018).
   %%%%%%%%%%%%%%%%%%%%%%%%%%%%%
   \bibitem{9}
     A. A Shaikh, M. S. Lone and P. R. Ghosh,
       \textit{Normal curves on a smooth immersed surface}, 
     submitted, (2019).
     
\bibitem{10}
     A. A Shaikh, M. S. Lone and P. R. Ghosh,
       \textit{Rectifying curves under conformal transformation}, 
     submitted, (2019).
\bibitem{11}
     A. A Shaikh, M. S. Lone and P. R. Ghosh,
       \textit{Conformal image of an osculating curve on a smooth immersed surface}, 
     submitted, (2019).          
     
\end{thebibliography}
\end{document}